\tikzset{>=stealth}
\def\@tocline#1#2#3#4#5#6#7{\relax
  \ifnum #1>\c@tocdepth 
  \else
    \par \addpenalty\@secpenalty\addvspace{#2}%
    \begingroup \hyphenpenalty\@M
    \@ifempty{#4}{%
      \@tempdima\csname r@tocindent\number#1\endcsname\relax
    }{%
      \@tempdima#4\relax
    }%
    \parindent\z@ \leftskip#3\relax \advance\leftskip\@tempdima\relax
    \rightskip\@pnumwidth plus4em \parfillskip-\@pnumwidth
    #5\leavevmode\hskip-\@tempdima
      \ifcase #1
       \or\or \hskip 2em \or \hskip 2em \else \hskip 3em \fi%
      #6\nobreak\relax
    \dotfill\hbox to\@pnumwidth{\@tocpagenum{#7}}\par
    \nobreak
    \endgroup
  \fi}
\newtheorem{intro-thm}{Theorem}[]
\theoremstyle{plain}
\newtheorem{thm}{Theorem}[section]
\newtheorem{theorem}[thm]{Theorem}
\newtheorem{q}[thm]{Question}
\newtheorem{lemma}[thm]{Lemma}
\newtheorem{corollary}[thm]{Corollary}
\newtheorem{proposition}[thm]{Proposition}
\theoremstyle{definition}
\newtheorem{remark}[thm]{Remark}
\newtheorem{definition}[thm]{Definition}
\newtheorem{example}[thm]{Example}
\newcommand{\Proj}{{\P roj}}
\newcommand{\codim}{{\rm codim}}
\newcommand{\Spec}{{\rm Spec \,}}
\newcommand{\sE}{{\mathcal E}}
\newcommand{\sG}{{\mathcal G}}
\newcommand{\sH}{{\mathcal H}}
\newcommand{\sO}{{\mathcal O}}
\newcommand{\sU}{{\mathcal U}}
\newcommand{\sV}{{\mathcal V}}
\newcommand{\sW}{{\mathcal W}}
\newcommand{\sX}{{\mathcal X}}
\newcommand{\sY}{{\mathcal Y}}
\newcommand{\sZ}{{\mathcal Z}}
\newcommand{\A}{{\mathbb A}}
\renewcommand{\P}{{\mathbb P}}
\newcommand{\Q}{{\mathbb Q}}
\newcommand{\Z}{{\mathbb Z}}
\newcommand{\colim}{{\rm colim \,}}
\newcommand{\hocolim}{{\rm hocolim \,}}
\newcommand{\holim}{{\rm holim \,}}
\newcommand{\DM}[2]{\mathbf{DM}_{#2}^{\mathit{eff}}(#1)}
\newcommand{\fixme}[1]{\textcolor{red}{$\bullet$}\marginpar{\small \textcolor{red}{#1}}}
\begin{document}

\title{The Nisnevich Motive of an Algebraic Stack}
\author{Utsav Choudhury}
\address{Stat Math unit, Indian Statistical Institute Kolkata, 203 Barrackpore Trunk Road, Kolkata 700108 India}
\email{prabrishik@gmail.com}

\author{Neeraj Deshmukh}
\address{Institut f\"{u}r Mathematik, Universit\"{a}t Z\"{u}rich, Winterthurerstrasse 190, CH-8057 Z\"{u}rich, Switzerland}
\email{neeraj.deshmukh@math.uzh.ch}

\author{Amit Hogadi}
\address{Department of Mathematics, Indian Insititute of Science Education and Research (IISER) Pune, Dr. Homi Bhabha road, Pashan, Pune 411008 India}
\email{amit@iiserpune.ac.in}



\date{}

\begin{abstract}
We construct the motive of an algebraic stack in the Nisnevich topology. For stacks which are Nisnevich locally quotient stacks, we give a presentation of the motive in terms of simplicial schemes. We also show that the motivic cohomology agrees with the Chow groups of Edidin-Graham-Totaro with integer coefficients. 
\end{abstract}

\maketitle

\section{Introduction}
For a smooth scheme $X$ over a perfect field $k$, we have natural isomorphisms,
\begin{equation}
\label{equation-chow-comparison}
H^{n,i}(X,\Z)\cong CH^i(X,2i-n)
\end{equation}
between the motivic cohomology groups \cite{MWV} on the left, and Bloch's higher Chow groups \cite{Bloch86} on the right. When $n=2i$, this relates motivic cohomology groups $H^{2i,i}(X,\Z)$ to the ordinary Chow groups $CH^i(X,0)=CH^i(X)$ of $X$. 
Such a comparison theorem for ordinary Chow groups was first proved by Voevodsky in \cite{FSV} under the assumption that $k$ admits resolution of singularities. It was extended to all higher Chow groups for $k$ perfect, again by Voevodsky, in \cite{MWV}. 
The motivic cohomology groups of $X$ are homomorphisms groups $Hom_{\DM{k,\Z}{}}(M(X),\Z(i)[n])$ from the motive $M(X)$ of $X$ to the motivic complexes $\Z(i)[n]$ in the triangulated category of motives, $\DM{k,\Z}{}$ (see \cite{MWV}).

The significance of the isomorphisms in (\ref{equation-chow-comparison}) is that we can translate various statements about Chow groups and intersection theory into statements about motivic cohomology. This is useful because statements and constructions involving (higher) Chow groups are often hard to prove (for example, Bloch's localisation sequence). However, arguments are greatly simplified if we consider analogous statements about motivic cohomology in $\DM{k,\Z}{}$. The two can then be related by Equation (\ref{equation-chow-comparison}).

Guided by this observation, in this article, we study the motives of algebraic stacks and derive various results about them. We also relate these motives to the Chow groups of algebraic stacks.

For algebraic stacks, the question of the correct notion of Chow groups is a subtle one. The first definition of Chow groups for algebraic stacks was given by Vistoli in \cite{VistoliIntersection} developing a rational intersection theory for stacks in the style of the Fulton-MacPherson theory for schemes. However, as there are not many invariant cycles on a stack, these Chow groups turn out to be quite small. Moreover, they lack expected properties like homotopy invariance, intersection product, etc. A more refined notion, as suggested by Totaro, was defined by Edidin and Graham in \cite{EG} for quotient stacks (and later generalised by Kresch in \cite{KreschChow} for all Artin stacks) which includes ``more cycles" and has better properties. These are called the Edidin-Graham-Totaro Chow groups and they are defined over integers. With $\Q$-coefficients, these two Chow groups agree. In this article, we relate the Edidin-Graham-Totaro Chow groups to the motivic cohomology of algebraic stacks (see Theorem \ref{chow-comparison}).

The story on the motivic side for algebraic stacks begins with Bertrand To\"{e}n. The notion of a motive for stacks was first defined by To\"{e}n for Deligne-Mumford stacks in \cite{Toen}. The general theory has been subsequently developed and extended by various authors (\!\!\cite{MotivesDM},\cite{HoyoisSixOperations},\cite{Joshua1,Joshua2}). In \cite{scholbach-shtukas}, a theory of motives for stacks is developed with a view of applications to number theory.
However, most of the existing literature on motives for algebraic stacks deals with $\Q$-coefficients and in the \'{e}tale topology. In \cite{Joshua2}, a comparison theorem similar to Theorem \ref{chow-comparison} is proved for the \textit{\'{e}tale} motivic cohomology and Totaro's Chow groups \textit{with $\Q$-coefficents}.  We use the notation $\DM{k,\Q}{\acute{e}t}$ for the corresponding triangulated category of \'{e}tale motives.

The Nisnevich topology is better suited for the study of Chow groups from the motivic perspective. This is evident from the comparison isomorphisms (\ref{equation-chow-comparison}) of Voevodsky. But as Nisnevich and \'{e}tale motivic cohomology agree over $\Q$, the choice of topology does not present any serious difficulty for comparing motivic cohomology with Chow groups rationally. However, all torsion information in the Chow groups is lost after tensoring with $\Q$. Hence, having a theory of motives in the Nisnevich topology with integral coefficients is desirable.

One definition for such a motive has already been given in \cite{hoskins2019} by Hoskins and Lehalleur. However, their definition employs the ``finite dimensional approximation" technique of Totaro in \cite{TotaroChow}, and is restricted to what they call \textit{exhaustive stacks}. This definition is applicable for quotient stacks as well as stacks like the moduli of vector bundles on smooth curves, but fails to hold in general.


The construction of a motive for an algebraic stack described in \cite{MotivesDM} (for the \'{e}tale topology) can also be carried out in the Nisnevich topology. This gives us a canonical definition of the motive for any algebraic stack locally of finite type over a field $k$. In this paper, we explore this construction. Note that while the construction defines the motive of an algebraic stack in great generality, it is ill-suited for the purpose of computations. Hence, we restrict ourselves to algebraic stacks that are ``Nisnevich locally quotient stacks", in the following sense:

\begin{definition}
	\label{definition-cd-quotient-stack}
	Let $\sX$ be an algebraic stack locally of finite type over a field $k$. We say that $\sX$ is a \textit{global quotient stack} if $\sX= [X/GL_n]$ for an algebraic space $X$. We say that $\sX$ is a \textit{cd-quotient stack} if it admits a Nisnevich covering $[X/GL_n]\rightarrow \sX$ by a global quotient stack\footnote{This nomenclature is inspired by \cite[Definition 2.1]{RydhApproximation}. Cd stands for \textit{completely decomposable}. Cd-topology is the old name for Nisnevich topology.}.
\end{definition}

For us, a Nisnevich covering of an algebraic stack will always mean a representable morphism (representable by \textit{algebraic spaces}) whose base change to any scheme is a Nisnevich covering. Note that we do not assume that the morphism is schematic (representable by schemes). Recall that a \emph{Nisnevich covering} $f: X\rightarrow Y$ of algebraic spaces is a surjective \'{e}tale morphism such that for any field-valued point $y:\Spec k\rightarrow Y$ there exists a lift $x: \Spec k\rightarrow X$ such that $y=f\circ x$.

The property of being a cd-quotient is enjoyed by a large class of algebraic stacks. Every global quotient stack is a cd-quotient stack. Further, for any quotient stack $[X/G]$ with $G$ a linear algebraic group, $X\times^G GL_n \rightarrow [X/G]$ is a $GL_n$-torsor, realising $[X/G]$ as a global quotient stack. 

Also, exhaustive stacks of \cite{hoskins2019} turn out to be cd-quotient stacks (see Section \ref{section-exhaustive}).

If $k$ is a perfect field then any stack with quasi-finite diagonal over $k$ is a cd-quotient stack (see \cite[\S 2]{conrad}). In particular, this includes all quasi-separated Deligne-Mumford stacks over $k$. 

Furthermore, if $\sX$ admits a good moduli space (in the sense of \cite{goodmodulispaces}), then by a theorem of Alper, Hall and Rydh, it is Nisnevich locally of the form $[\Spec(A)/GL_n]$ (see \cite[Theorem 13.1]{AHR19}), and hence a cd-quotient stack\footnote{In fact, this is generalised in \cite{ahhlr21} to any quasi-compact and quasi-separated stack with \textit{nice} stablizier groups.}.

The property of being a quotient stack Nisnevich locally gives us a greater handle on the stack from the computational point of view. In particular, we can construct a presentation of the stack in terms of certain simplicial schemes in the (unstable) $\A^1$-homotopy category $\sH_{\bullet}(k)$. More precisely, we have the following:

\begin{theorem}\label{motive-construction}
	Let $\sY\rightarrow \sX$ be a representable Nisnevich cover of algebraic stacks over a field $k$. Assume that $\sY$ is of the form $[Y/GL_n]$ for some algebraic space $Y$, and let $p:Y\rightarrow\sY\rightarrow\sX$ be the composite. Then, for the \v{C}ech nerve $Y_{\bullet}$ associated to $p $, $p_{\bullet}:Y_{\bullet}\rightarrow \sX$ is a Nisnevich local weak equivalence, i.e, $Y_{\bullet}\simeq \sX$ in $\sH_{\bullet}(k)$.
\end{theorem}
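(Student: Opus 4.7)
My strategy is to reduce the theorem to two standard ingredients: (i) that $p : Y \to \sX$ is itself a representable Nisnevich cover, and (ii) that the \v{C}ech nerve of a cover in a topology is a hypercover and hence a weak equivalence in the Nisnevich-local model structure on simplicial presheaves.

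For (i), I would factor $p$ as the composite $Y \to [Y/GL_n] = \sY \to \sX$. The second arrow is a representable Nisnevich cover by hypothesis, so the interesting point is the first arrow, which is a $GL_n$-torsor. Here I would invoke the fact that $GL_n$ is a \emph{special} algebraic group in the sense of Serre--Grothendieck: every $GL_n$-torsor is Zariski-locally (hence Nisnevich-locally) trivial. This shows that $Y \to \sY$ admits sections Nisnevich-locally and is therefore a Nisnevich cover; composing, $p$ is a representable Nisnevich cover. Since both factors are representable by algebraic spaces, so is $p$, and each term $Y_n = Y \times_\sX \cdots \times_\sX Y$ of the \v{C}ech nerve is an algebraic space.

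For (ii), once $p$ is a representable Nisnevich cover, the matching-object maps of $Y_\bullet$ are all base-changes of $p$ and so are themselves Nisnevich covers; this is precisely the hypercover condition. I would then appeal to the hypercover-descent theorem for simplicial presheaves with the Nisnevich topology (Dugger--Hollander--Isaksen in full generality, or the Morel--Voevodsky formulation in the motivic setting) to conclude that $p_\bullet : Y_\bullet \to \sX$ is a Nisnevich-local weak equivalence, and therefore an isomorphism in $\sH_\bullet(k)$.

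The main obstacle I anticipate is not mathematical but foundational: one must carefully identify the stack $\sX$ as an object of $\sH_\bullet(k)$---typically via the nerve of the groupoid $\sX(U)$ for smooth $U$---and then verify that the simplicial algebraic space $Y_\bullet$ maps to this simplicial presheaf as a Nisnevich hypercover in the precise technical sense demanded by the descent theorem. Once this set-up is in place and one has checked that ``Nisnevich cover'' of stacks (representable and Nisnevich on every scheme base-change) indeed yields covering maps in the chosen site, the remainder of the argument is formal.
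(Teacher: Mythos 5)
Your outline takes a genuinely different route from the paper, and in spirit it is a reasonable one, but there is a real gap in step (ii) as stated.

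The paper does not invoke a hypercover-descent theorem at all. Instead it verifies the local weak equivalence by hand on Hensel local stalks: surjectivity and injectivity of $\pi_0(Y_\bullet(\sO)) \to \pi_0(\sX(\sO))$ using that $GL_n$-torsors over a Hensel local ring are trivial and that the cover $\sY\to\sX$ has Nisnevich-local sections, and then contractibility of the homotopy fibre for the higher $\pi_i$. The crucial technical input in the second part is Hollander's observation \cite[Remark 2.3]{Holl} that the $2$-categorical fibre product of algebraic stacks is already a model for the homotopy fibre product of the corresponding simplicial presheaves. Without this, the algebraic \v{C}ech nerve $Y_\bullet$ (formed with stacky fibre products, which are algebraic spaces) is not a priori the ``correct'' homotopy-theoretic \v{C}ech nerve of $p$.

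The gap in your argument is precisely at the point where you ``appeal to the hypercover-descent theorem.'' The theorem of Dugger--Hollander--Isaksen that you cite establishes that hypercovers $U_\bullet\to X$ \emph{of a representable presheaf} $X$ are Nisnevich-local weak equivalences. Here the target $\sX$ is not representable: it is a $1$-truncated simplicial presheaf (the nerve of a stack). To apply a hypercover-style argument to such a target you need to know something extra, namely that $\sX$ is Nisnevich-locally fibrant as a simplicial presheaf (equivalently, that stacky fibre products compute homotopy fibre products over it) --- this is exactly the Hollander lemma that the paper's proof leans on, and it is absent from your proposal. A related, smaller imprecision: for a \v{C}ech nerve (a $0$-coskeleton over $\sX$) the relative matching maps in simplicial degree $\geq 1$ are \emph{isomorphisms}, not merely Nisnevich covers as ``base-changes of $p$''; the only genuine covering condition is the degree-$0$ one, i.e.\ that $p$ itself has local sections.

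If you replace the appeal to DHI with a direct stalkwise check --- showing that for every Hensel local $\sO$ the map $Y_\bullet(\sO)\to\sX(\sO)$ is an acyclic Kan fibration of simplicial sets, using the explicit description of $Y_n(\sO)$ as tuples of points of $Y(\sO)$ together with compatible isomorphisms in $\sX(\sO)$, and using $GL_n$-specialness plus Nisnevich sections for the degree-$0$ surjectivity --- then you arrive at essentially the paper's argument. Alternatively, you could keep the hypercover framing by first citing Hollander's lemma to justify that the algebraic fibre products in $Y_\bullet$ model the homotopy fibre products. As it stands, the proposal lets the hardest foundational point --- identifying $\sX$ as an object of $\sH_\bullet(k)$ in a way compatible with the \v{C}ech nerve of $p$ --- slide by under ``once this set-up is in place, the remainder is formal,'' whereas that set-up is in fact the content of the proof.
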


\begin{remark}\label{remark-local-epimorphism}
	As pointed to us by the referee, Theorem \ref{motive-construction} is true more generally: Let $p: Y\rightarrow \sX$ be a local epimorphim of simplicial presheaves. Denote by $N(p)$ the diagonal of the associated \v{C}ech bisimplicial object. Then the natural map $N(p)\rightarrow \sX$  is a local equivalence.\\
	In particular, Theorem \ref{motive-construction} continues to hold if the morphism $\sY\rightarrow \sX$ is taken to be a Nisnevich local epimorphism (as opposed to an honest Nisnevich covering) of simplicial presheaves.\\
	To maintain the geometric context we stick to $Y$ an algebraic space and $\sX$ an algebraic stack.
\end{remark}

The above theorem gives us a nice description of the motive of a cd-quotient stack in terms of the motive of the simplicial scheme $Y_{\bullet}$ in $\DM{k,\Z}{}$. This allows us to reduce many computations about motives to stacks to motives of (simplicial) schemes. In fact, we will use this to show that for quotient stacks, the cohomology groups of this motive agree with the Chow groups of Edidin-Graham-Totaro (see \cite{EG}, \cite{KreschChow}). 

\begin{theorem}
	\label{chow-comparison}
	Let $\sX:=[X/GL_r]$ be a quotient stack, where $X$ is a smooth quasi-projective scheme with a smooth action of $GL_r$. Then the Edidin-Graham-Totaro (higher) Chow groups and the motivic cohomology groups agree integrally, i.e, 
	\[CH^i(\sX, 2i-n)\simeq H^{n,i}(\sX,\Z).\]
Here $ H^{i,j}(\sX,\Z) := Hom_{\DM{k,\Z}{}}(M(\sX),\Z(j)[i])$. See \ref{definition-motive-stack} for the definition of $M(\sX)$.

\end{theorem}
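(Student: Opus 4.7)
The plan is to reduce the computation to smooth schemes via Totaro's finite-dimensional approximation of $BGL_r$ and then invoke Voevodsky's comparison \eqref{equation-chow-comparison}. First, apply Theorem \ref{motive-construction} with $Y = X$: the map $X \to \sX$ is a $GL_r$-torsor, hence a Nisnevich cover, so its \v{C}ech hypercover $Y_\bullet$ (with $Y_p = X \times GL_r^{p}$) satisfies $Y_\bullet \simeq \sX$, and consequently $M(\sX) \simeq M(Y_\bullet)$ in $\DM{k,\Z}{}$. Following Totaro, pick representations $V_j$ of $GL_r$ with open subschemes $U_j \subseteq V_j$ on which $GL_r$ acts freely and with $c_j := \codim(V_j \setminus U_j) \to \infty$. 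The quotients $X_j := (X \times U_j)/GL_r$ are smooth quasi-projective schemes carrying natural morphisms $X_j \to \sX$, and by definition
\[ CH^i(\sX, 2i-n) = CH^i(X_j, 2i-n) \qquad \text{for } c_j > i. \]
A further application of Theorem \ref{motive-construction}, now to $X_j$ viewed as the $GL_r$-quotient of $X \times U_j$, identifies $M(X_j) \simeq M(Y_\bullet \times U_j)$.

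The heart of the proof is the comparison of these two simplicial motives in the fixed bidegree $(n,i)$. At each simplicial level $p$, the product $Y_p \times U_j$ sits inside $Y_p \times V_j$ as an open with closed complement $Y_p \times (V_j \setminus U_j)$ of codimension $c_j$; combining the associated Gysin/localisation sequence in motivic cohomology with the vanishing $H^{a,b}(-, \Z) = 0$ for $b < 0$ and the $\A^1$-homotopy invariance $H^{*,*}(Y_p \times V_j, \Z) \cong H^{*,*}(Y_p, \Z)$ forces the projection $Y_p \times U_j \to Y_p$ to induce an isomorphism on $H^{*,i}(-, \Z)$ for every $p$ as soon as $c_j > i$. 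Propagating this levelwise isomorphism through the descent spectral sequence that computes $H^{*,i}$ of a simplicial motive then yields $H^{n,i}(\sX, \Z) \cong H^{n,i}(X_j, \Z)$ whenever $c_j > i$.

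Since $X_j$ is a smooth scheme, Voevodsky's comparison \eqref{equation-chow-comparison} gives $H^{n,i}(X_j, \Z) \cong CH^i(X_j, 2i - n)$, and composing with the Edidin--Graham--Totaro identification produces the theorem. The main technical obstacle is the levelwise Gysin/purity step: one must ensure its applicability in this generality (for instance by handling the possibly non-smooth complements $V_j \setminus U_j$ via Borel--Moore motivic cohomology or a careful choice of the pairs $V_j, U_j$) and verify that the stability range $c_j > i$ is the same one underlying the Edidin--Graham--Totaro definition, so that the levelwise isomorphism survives passage to the simplicial colimit.
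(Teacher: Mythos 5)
Your proposal is correct, but it takes a genuinely different route from the paper's. The paper cites Krishna's result (\cite[Proposition 3.2]{krishna2012}, restated as the lemma preceding the theorem) which gives a canonical isomorphism $X_G(\rho)\cong X^{\bullet}_G$ in $\mathcal{H}_{\bullet}(k)$ between the Borel-construction colimit $\colim_N X^N(\rho)$ and the bar/\v{C}ech simplicial scheme, and then concludes by noting that $M$ commutes with filtered homotopy colimits (Remark \ref{remark-motive-commutes-with-homotopy-colimits}) and that the groups $\Hom_{\DM{k,\Z}{}}(M(X^N(\rho)),\Z(i)[n])$ stabilise. You instead work in a fixed weight $i$ and with a single $j$ satisfying $c_j>i$: you compare $M(Y_{\bullet})$ with $M(Y_{\bullet}\times U_j)$ by a levelwise Gysin/localisation argument, and then push the levelwise isomorphism through the descent spectral sequence of a simplicial scheme. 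This bypasses Krishna's lemma entirely and never forms the colimit over $j$ -- in effect you re-derive the motivic-cohomology consequence of Krishna's isomorphism rather than citing it. The trade-offs: your route must address convergence of the descent spectral sequence (which is fine here, since $H^{q,i}(Y_p,\Z)=0$ for $q>2i$ gives a uniform bound on the $E_1$ page), and must invoke Bloch's localisation in its Borel--Moore form since $V_j\setminus U_j$ need not be smooth -- you correctly flag this, and the required vanishing is simply that $CH_d(Z,m)=0$ for $d>\dim Z$. The paper's argument is shorter once Krishna's proposition is accepted as a black box; yours is more self-contained and makes the geometric mechanism (purity in codimension $>i$) explicit, at the cost of the spectral-sequence bookkeeping.
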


The proof is quite straightforward and follows easily from \cite[Proposition 3.2]{krishna2012} and Theorem \ref{motive-construction}. The quasi-projective hypothesis is required to ensure that quotients of schemes by $GL_n$-actions remain schemes (see \cite[\S 2.2.1]{krishna2012}).



\begin{remark}\label{remark-smooth-nisnevich}
	In \cite{pirisi2018}, the notion of a smooth-Nisnevich covering is defined. A smooth-Nisnevich covering of a stack $\sX$ is a morphism $f:Y\rightarrow \sX$ with $Y$ a scheme such $f$ is smooth, surjective and any morphism $\Spec k\rightarrow \sX$ lifts to $Y$. By \cite{pirisi2018,aizenbud2019}, every quasi-separated algebraic stack admits a smooth-Nisnevich covering by a scheme. A smooth-Nisnevich covering is a local epimorphism in the Nisnevich topology since every Hensel local ring valued point also lifts. Hence, by Remark \ref{remark-local-epimorphism}, $Y_{\bullet}\rightarrow \sX$ is a Nisnevich local weak equivalence. In particular, a smooth-Nisnevich covering provides a presentation by simplicial schemes for any quasi-separated algebraic stack. Thus, the results in Sections \ref{section-motive-construction}, \ref{section-various-triangles} and \ref{section-applications-motivic-cohomology} also hold in the more general setting of (smooth) quasi-separated algebraic stacks.
\end{remark}

\noindent\textbf{Outline.} The paper is structured as follows: In Section \ref{section-motive-construction}, we define the Nisnevich motive for an algebraic stack following \cite{MotivesDM}. We then prove Theorem \ref{motive-construction}. The argument involves two key ideas: that every principal $GL_n$-bundle over a Henselian local ring is trivial, and that fibre products of stacks are a model for their homotopy fibre products in the model category of presheaves of groupoids (see \cite[Remark 2.3]{Holl}). Theorem \ref{motive-construction} gives us a computational handle on cd-quotient stacks which is used in the sequel to prove various results.

We then show, in Section \ref{section-chow-comparison}, that for quotients of quasi-projective schemes by $GL_n$, the integral motivic cohomology of an algebraic stack agrees with the Edidin-Graham-Totaro Chow groups (Theorem \ref{chow-comparison}). As stated earlier, such a result has already been proven in \cite[Theorem 3.5]{Joshua2} for the \'{e}tale motivic cohomology groups. The crucial difference being that in \cite{Joshua2} the comparison theorem is proved for rational Chow groups (and for the \'{e}tale topology) whereas we prove it for integral Chow groups (and in the Nisnevich topology). So, in a sense, the additional information we get from Theorem \ref{chow-comparison} is that the motivic cohomology groups and the Edidin-Graham-Totaro Chow groups also agree in their torsion parts.

In Section \ref{section-various-triangles}, we convince ourselves that various exact triangle for motives continue to hold for our construction. In particular, we establish Nisnevich descent (Proposition \ref{nisnevich-descent}), projective bundle formula (Theorem \ref{projective-bundle-formula}) and the Gysin triangle (Theorem \ref{gysin-triangle}) for cd-quotient stacks. This section adapts the corresponding results for the \'{e}tale topology in \cite{MotivesDM}.

One definition of motivic cohomology of schemes is as the Zariski hypercohomology of the motivic complexes $\Z(q)$ \cite[Proposition 14.16]{MWV}. In Section \ref{section-applications-motivic-cohomology}, we establish a similar result for cd-quotient stacks. Since stacks have very few Zariski open sets, the correct topology turns out to be the smooth-Nisnevich topology. We show that the smooth-Nisnevich hypercohomology of the motivic complexes $\Z(q)$ agrees with the homomorphism groups $H^{i,j}(\sX,\Z)$.

In Section \ref{section-exhaustive}, we compare our construction of the motive with the one in \cite{hoskins2019} for exhaustive stacks. We show that these two agree in $\DM{k,\Z}{}$. A similar comparison is proved for \'{e}tale motives in \cite[Appendix A]{hoskins2019}.

We include a technical result about model categories (Lemma \ref{homotopy-colimits-left-adjoints}) that is used for proving the projective bundle formula (Theorem \ref{projective-bundle-formula}) in an appendix. This is an elementary result which is surely known to anybody familiar with model categories (see also Remark \ref{remark-reference-homotopy-colimits}), and we only include it for our own edification.\\

\noindent\textbf{Acknowledgements.} The second-named author was supported by the INSPIRE fellowship (IF160348) of the Department of Science and Technology, Govt.\ of India during the course of this work. This work has also benefitted from the generous support of the Institut Mittag-Leffler (Swedish Research Council grant no. 2016-06596) while the second-named author was in residence there.\\
We would like to thank Roberto Pirisi for pointing out Remark \ref{remark-smooth-nisnevich}. We also thank the anonymous referee for his many helpful comments and suggestions.

\section{Nisnevich motive of an algebraic stack}
\label{section-motive-construction}
In this section we will define the motive of an algebraic stack as well as prove Theorem \ref{motive-construction}. In order to do this, the following observation will be crucial. 

\begin{remark}[Stacks as simplicial sheaves]
	\label{remark-stacks-as-simplicial-sheaves}
	Given an algebraic stack $\sX$ (or, more generally, a presheaf of groupoids), one associates a simplicial sheaf to $\sX$ as follows: $\sX$ defines a (strict) sheaf of groupoids $\sX \in Fun((Sch/k)^{op}, Grpds)$, which sends any $k$-scheme $U\mapsto \sX_U$ in the category of groupoids. Applying the nerve functor objectwise, we get a sheaf of simplicial sets. Let us briefly recall this procedure.\\
	Given a groupoid $\sX_U$ its nerve is a simplicial set $N(\sX_U)$ whose $k$-simplices are given by $k$-tuples of composable arrows,
	\[N(\sX_U)_k =\{A_0\overset{f_1}{\rightarrow} \ldots \overset{f_k}{\rightarrow} A_k \; | \; \text{$A_i$'s are objects and $f_i$ are morphisms in $\sX_U$}\}\]
	Note that $0$-simplices are just objects of $\sX_U$, while $1$-simplices are morphisms between them.\\
	The face maps $d_i: N(\sX_U)_k \rightarrow N(\sX_U)_{k-1}$ are given by composition of morphism at the $i$-th object (or deleting the $i$-th object for $i=0,k$). Similarly, the degeneracy maps $s_i: N(\sX_U)_k \rightarrow N(\sX_U)_{k+1}$ are given by inserting the identity morphism at the $i$-th object. Thus, we have a functor 
	\[Fun((Sch/k)^{op}, Grpds)\rightarrow \Delta^{op} PSh(Sm/k)\]
	from the category of presheaves of groupoids to the category of simplicial presheaves	(see \cite[Theorem 1.4]{HollPhD}). \\
	If $\sX\rightarrow \sZ$, $\sY\rightarrow \sZ$ are two morphisms of algebraic stacks, then viewing them as simplicial sheaves, one can form their homotopy fibre product $\sX \times^h_{\sZ} \sY$ in the homotopy category of simplicial presheaves. By \cite[Remark 2.3]{Holl}, the usual fibre product in the category of stacks $\sX \times_{\sZ} \sY$ serves as a model for this homotopy fibre product.
	
	In the remaining article, we will abuse notation by denoting the simplicial sheaf associated to a stack $\sX$ by $\sX$ itself.
	
\end{remark}

We will now recall the notion of the (unstable) $\A^1$-homotopy category over a field, as well as the construction of the motive of an algebraic stack as in \cite[Section 2]{MotivesDM}. 

Fix a base field $k$. Let $Sm/k$ denote the category of smooth schemes over $k$. Let $\Delta^{op}PSh(Sm/k)$ be the category of simplicial presheaves on $Sm/k$. Note that by Remark \ref{remark-stacks-as-simplicial-sheaves}, any stack $\sX$ can be considered as an object of $\Delta^{op}PSh(Sm/k)$.

$\Delta^{op}PSh(Sm/k)$ has a local model structure with respect to the Nisnevich topology (see \cite{Jar}). A morphism $f:X\rightarrow Y$ in $\Delta^{op}PSh(Sm/k)$ is a weak equivalence if the induced morphisms on stalks (for the Nisnevich topology) are weak equivalences of simplicial sets. Cofibrations are monomorphisms, and fibrations are characterised by the right lifting property. 

We Bousfield localise this model structure with respect to the class of maps $X\times \A^1\rightarrow X$ (see \cite[3.2]{MV}). The resulting model structure is called the Nisnevich motivic model structure. Denote by $\mathcal{H}_{\bullet}(k)$ the resulting homotopy category. This is the (unstable) $\A^1$-homotopy category for smooth schemes over $k$.


We will now define the Nisnevich motive of an algebraic stack. We begin by rapidly recalling the construction of the triangulated category of mixed motives.

Let $Cor_k$ denote the category of finite correspondences whose objects are smooth separated schemes over $k$. For any two $X,Y$, the morphisms of $Cor_k$ are given by $Cor(X,Y)$ which is the free abelian group generated by irreducible closed subschemes $W\subset X\times Y$ that are finite and surjective over $X$. An additive functor $F:Cor_k^{op}\rightarrow \mathbf{Ab}$ is called a presheaf with transfers. Let $PST(k,\Z)$ denote the category presheaves with transfers. For any smooth scheme $X$, let $\Z_{tr}(X)$ be the presheaf with tranfers which on any smooth scheme $Y$ is defined as
\[\Z_{tr}(X)(Y):=Cor(X,Y)\]
Let $K(PST(k,\Z))$ denote the category of complexes of presheaves with transfers. The category $K(PST(k,\Z))$ also has a Nisnevich motivic model structure which is defined analogously as in the case of $\Delta^{op}PSh(Sm/k)$. We denote the associated homotopy category by $\DM{k,\Z}{}$. This is Voevodsky's triangulated category of mixed motives in the Nisnevich topology (for details, see \cite{MWV}).  Then we have a functor
\[N\Z_{tr}(-): \Delta^{op}PSh(Sm/k)\rightarrow K(PST(k,\Z))\]
which sends a simplicial scheme $X_{\bullet}$ to its normalised chain complex $N\Z_{tr}(X_{\bullet})$. The $i$-th degree term of the chain complex $N\Z_{tr}(X_{\bullet})$ is given by $\Z_{tr}(X_i)$.

Since every simplicial presheaf is weakly equivalent to a simplicial scheme (see \cite{DHI}), this determines the derived functor of $N\Z_{tr}(-)$ completely. Note that $N\Z_{tr}(-)$ is a left Quillen functor and so it admits a left derived functor $M$ on the homotopy categories:
\[M:\mathcal{H}_{\bullet}(k)\rightarrow \DM{k,\Z}{}.\]
For a scheme $X$, $M(X)$ is the image of $\Z_{tr}(X)$ in $\DM{k,\Z}{}$.\\
The above adjunction is proved in \cite{MotivesDM} for the \'{e}tale model structure. The same proof works for the Nisnevich topology as well.

\begin{remark}
	\label{remark-motive-commutes-with-homotopy-colimits}
	$N\Z_{tr}$ is a left Quillen functor with respect to the Nisnevich motivic model structures on $\mathcal{H}_{\bullet}(k)$ and $\DM{k,\Z}{}$. Thus, Lemma \ref{homotopy-colimits-left-adjoints} implies that $M$ commutes with homotopy colimits.
\end{remark}

\begin{remark}
	The original contruction of $\DM{k,\Z}{}$ by Voevodsky uses the classical Grothendieck-Verdier derived category formalism. See \cite[Chapter 11]{cisinsi-deglise} for the construction via model categories.
\end{remark}

\begin{definition}[Motive of an algebraic stack]
	\label{definition-motive-stack}
	Let $\sX$ be an algebraic stack over $k$ thought of as a simplicial presheaf by the nerve construction outlined in Remark \ref{remark-stacks-as-simplicial-sheaves}. The motive of an algebraic stack $\sX$ is defined to be the image $M(\sX)$ in $\DM{k,\Z}{}$.\\
	Note that when $\sX$ is representable by a scheme $X$, $M(X)$ is the image of $\Z_{tr}(X)$ in $\DM{k,\Z}{}$.
\end{definition}

\begin{remark}
	The above construction was first done in \cite{MotivesDM} for the \'{e}tale model structure. However, it still goes through if we use the Nisnevich model structure instead. See also \cite{Joshua1} for an alternative approach.
\end{remark}

We will now show that cd-quotient stacks admit presentations by simplicial schemes in $\sH_{\bullet}(k)$. This is the content of Theorem \ref{motive-construction}. Having such a presentation will allow us to use homotopical descent techniques in the sequel in order to reduce various problems to the case of (simplicial) schemes. 

For the sake of clarity we will first prove Theorem \ref{motive-construction} in the case when $\sY = \sX$, i.e, when $\sX$ is a global quotient stack. Theorem \ref{motive-construction} is a minor extension of this case.

\begin{lemma}
	\label{lemma-principal-bundles-nisnevich-sections}
	Let $X$ be an algebraic space with an action of $GL_n$ and $\sX:=[X/GL_n]$ be the corresponding quotient stack. Let $X_{\bullet}$ denote the \v{C}ech nerve associated to $p:X\rightarrow\sX$. Then the map of simplicial presheaves $p_{\bullet}: X_{\bullet}\rightarrow \sX$ is a Nisnevich local weak equivalence.
\end{lemma}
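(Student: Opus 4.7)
The plan is to check the local weak equivalence stalkwise in the Nisnevich topology, where the relevant stalks are sections over Henselian local $k$-algebras $S$. The two key inputs are (i) the triviality of $GL_n$-torsors over such $S$, and (ii) the cited Remark 2.3 of Hollander, which ensures that the strict 2-fibre products of stacks that make up the \v{C}ech hypercover compute the homotopy fibre products in the local model structure on simplicial presheaves.

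First, I would identify $X_\bullet$ explicitly. Since $p : X \to \sX = [X/GL_n]$ is a $GL_n$-torsor, the map $X \times_\sX X \to X \times GL_n$ sending $(x_1, x_2)$ to $(x_1, g)$ with $g \cdot x_1 = x_2$ is an isomorphism; inductively $X_n \cong X \times GL_n^n$, and $X_\bullet$ is the nerve of the translation groupoid $X \rtimes GL_n$. By Hollander's remark, this strict \v{C}ech nerve is a model for the homotopy \v{C}ech hypercover of $p$ in simplicial presheaves, so its augmentation to $\sX$ is what we need to analyze.

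Next, I would compare groupoids after evaluation at a Henselian local $S$. The simplicial set $X_\bullet(S)$ is the nerve of the action groupoid $[X(S)/GL_n(S)]$, while $\sX(S)$ is the groupoid whose objects are pairs $(E, f)$ with $E$ a $GL_n$-torsor on $S$ and $f : E \to X$ equivariant; the natural functor $[X(S)/GL_n(S)] \to \sX(S)$ sends $x \in X(S)$ to the pair (trivial torsor, $x$) and is the one induced by $p_\bullet(S)$. It is essentially surjective because every $GL_n$-torsor on a Henselian local ring is trivial (projective modules over local rings are free), and fully faithful because morphisms on either side are canonically identified with $\{g \in GL_n(S) : g x = x'\}$. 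An equivalence of groupoids yields a weak equivalence of nerves, which completes the stalkwise verification.

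The main obstacle, as I see it, is the technical step of identifying $X_\bullet$: one must verify that the strict \v{C}ech nerve of $p$, built from 2-fibre products of stacks, agrees as an object of $\Delta^{op}PSh(Sm/k)$ over $\sX$ with the homotopy \v{C}ech hypercover of the associated morphism of simplicial presheaves. This compatibility is precisely what Hollander's Remark 2.3 provides, but it requires careful 2-categorical bookkeeping before one can reduce, as above, to the elementary groupoid comparison.
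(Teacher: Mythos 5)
Your proof is correct and rests on the same two pillars as the paper's: stalkwise verification on Henselian local rings, and the triviality of $GL_n$-torsors over such rings. The packaging, however, differs in a way worth noting. The paper proceeds by computing homotopy sheaves degree by degree: $\pi_0$ is handled by the torsor-triviality argument, and $\pi_1$ (and higher) by invoking Hollander's Remark 2.3 to identify the homotopy fibre of $p_\bullet$ with the strict fibre product, and then observing the resulting \v{C}ech nerve is contractible. You instead identify $X_\bullet(S)$ directly as the nerve of the action groupoid $[X(S)/GL_n(S)]$ and show the functor $[X(S)/GL_n(S)]\to\sX(S)$ is an equivalence of groupoids (essentially surjective by torsor triviality, fully faithful by the Hom identification), from which the weak equivalence of nerves is immediate. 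This is arguably cleaner: it dispenses with the $\pi_1$ long exact sequence and consolidates the argument into a single categorical equivalence. It also relies on Hollander's remark more lightly than the paper does --- you cite it to justify working with the strict 2-fibre products forming $X_\bullet$, but the identification $X\times_\sX X\cong X\times GL_n$ is a direct consequence of the torsor property, whereas the paper genuinely needs Hollander to equate the homotopy fibre with the stacky fibre product in the $\pi_1$ step. One small caveat: do spell out the variance bookkeeping in the fully-faithfulness step (left versus right action, and that the functor sends $g:x\to x'$ to the torsor automorphism given by $g$ intertwining $f_x$ and $f_{x'}$), but this is routine and does not affect the substance.
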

\begin{proof}	
	It suffices to check that given a hensel local ring $\sO$, the induced map on $\sO$-points $p_{\bullet}: X_{\bullet}(\Spec\sO)\rightarrow \sX(\Spec\sO)$ is a weak equivalence of simplicial sets. Further, as a stack is a $1$-truncated simplicial set (being a groupoid valued functor), $\pi_i = 0$ for $i\geq 2$. Thus, we only need to verify that $p$ induces an isomorphism of homotopy groups for $i=0,1$.
	
	\noindent\underline{$i=0$:} Any map $\Spec\sO\rightarrow \sX$ gives rise to a $GL_n$-torsor $p':X\times_{\sX} \Spec\sO \rightarrow \Spec\sO$. As $\sO$ is a Hensel local ring, any $GL_n$-torsor over it is trivial and hence admits a section. This implies surjectivity on $\pi_0$.\\
	For injectivity, let $f_1, f_2:\Spec\sO\rightarrow X$ be two $\sO$-points of $X$ such that $p(f_1)=p(f_2)$, then there exists a map $F:\Spec\sO\rightarrow X\times_{\sX}X$ which after composing with each of the projection maps becomes $f_1$ and $f_2$, respectively. The map $F$ may be thought of as a 1-simplex in the simplicial set $X_{\bullet}(\Spec\sO)$, and therefore, corresponds to a map $\Delta^1\rightarrow X_{\bullet}(\Spec\sO)$, which gives a homotopy between the points $f_1$ and $f_2$ implying injectivity.
	
	\noindent\underline{$i=1$:} In this case, we need to show that for any $\Spec\sO$-valued point of $\sX$, the homotopy fibre product with $p_{\bullet}$ is contractible. Then, the long exact sequence of homotopy sheaves gives the required isomorphism. By \cite[Remark 2.3]{Holl}, the homotopy fibre product is precisely the fibre product in the category of stacks. This is a (\v{C}ech nerve of a) trivial $GL_n$-torsor over $\Spec\sO$, i.e, given a point $\Spec\sO\rightarrow\sX$, the homotopy fibre of $p_{\bullet}$ is precisely the \v{C}ech nerve $X_{\bullet}\times_{\sX}\Spec\sO$ corresponding to the $GL_n$-torsor $p':X\times_{\sX}\Spec\sO\rightarrow\Spec\sO$. Since $p'$ admits a section, the augmentation map $X_{\bullet}\times_{\sX}\Spec\sO\rightarrow\Spec\sO$ is a Nisnevich local weak equivalence. As, $\pi_i(\Spec\sO)=0$ for $i> 0$, we get the desired result. 
\end{proof}

Any Nisnevich cover $\sY\rightarrow\sX$ admits sections Nisnevich locally. Theorem \ref{motive-construction} now follows easily from this fact and Lemma \ref{lemma-principal-bundles-nisnevich-sections}.

\begin{proof}[Proof of Theorem \ref{motive-construction}]
	We need to check that $p_{\bullet}:Y_{\bullet}\rightarrow \sX$ induces an isomorphism on all homotopy sheaves, $\pi_i$. Further, it suffices to check this on all Hensel local schemes.
	
	\noindent\underline{$i=0$:} Let $\Spec\sO\rightarrow \sX$ be a point of $\sX$. Base changing, we get maps $Y\times_{\sX}\Spec\sO\rightarrow\sY\times_{\sX}\Spec\sO\rightarrow\Spec\sO$, where the first map is a principal $GL_n$-bundle and the second is a Nisnevich cover. So, the second map admits a Nisnevich local section. By the previous lemma, so does the first. This proves surjectivity.\\
	For injectivity, let $f_1,f_2:\Spec\sO\rightarrow Y$ be two points which map to the same point in $\sX$. This implies that there exists a section $\Spec\sO\rightarrow Y\times_{\sX}Y$ which after composing with each of the projection maps becomes $f_1$ and $f_2$, respectively.
	
	\noindent\underline{$i>0$:} To show this we need to show that for any $\Spec\sO$-valued point of $\sX$, the homotopy fibre product with $p_{\bullet}$ is contractible. Then, the long exact sequence of homotopy sheaves gives us the required isomorphism.\\
	As noted in the previous lemma, the homotopy fibre product is equal to the stacky fibre product\cite[Remark 2.3]{Holl}. Hence, for a point $\Spec\sO\rightarrow\sX$, the homotopy fibre of $p_{\bullet}$ is precisely the \v{C}ech nerve $Y_{\bullet}\times_{\sX}\Spec\sO$ of $\Spec\sO$. Since it admits a section, the augmentation map $Y_{\bullet}\times_{\sX}\Spec\sO\rightarrow\Spec\sO$ is a Nisnevich local weak equivalence. As, $\pi_i(\Spec\sO)=0$ for $i> 0$, we get the desired result.
\end{proof}

\begin{remark}
	\label{remark-gln-presentation}
	For a cd-quotient stack, we have a \v{C}ech nerve $Y_{\bullet}\rightarrow \sX$ which is a Nisnevich local weak equivalence, by Theorem \ref{motive-construction}. Applying the functor $M:\mathcal{H}_{\bullet}(k)\rightarrow \DM{k,\Z}{}$, we see that the motive of a cd-quotient stack $\sX$ is given by the normalised chain complex $N\Z_{tr}(Y_{\bullet})$.
\end{remark}

\begin{definition}
	For a cd-quotient stack $\sX$, let $p:X\rightarrow\sX$ be the presentation obtained from the composition $X\rightarrow [X/GL_n]\rightarrow\sX$ (as in the hypothesis of Theorem \ref{motive-construction}), and let $p_{\bullet}: X_{\bullet}\rightarrow \sX$ denote the associated \v{C}ech nerve. Motivated by the content of Theorem \ref{motive-construction}, we will call $p_{\bullet}: X_{\bullet}\rightarrow \sX$ a \textit{$GL_n$-presentation} of $\sX$. 
\end{definition}

\begin{remark}\label{motive-not-geometric}
	The category generated by motives of stacks as constructed above is larger than the category of geometric motives (motives generated by smooth quasi-projective schemes). In fact, if $G$ is a finite group, $BG$ is not a geometric motive. To see this note that any realization of a geometric motive must have bounded cohomology. Further, for a finite group, the cohomology of $BG$ is the same as the group cohomology of the group $G$. But if $G$ is finite cyclic, then the latter is periodic in odd degrees, showing that $BG$ does not have bounded cohomology for a cyclic group.
\end{remark}

\begin{proposition}
	\label{proposition-comparison-etale-motive}
	Let $\sX$ be a cd-quotient stack. The Nisnevich motive $M(Y_{\bullet})$ of $\sX$ agrees with its \'{e}tale motive in $\DM{k,\Z}{\acute{e}t}$ after \'{e}tale sheafification.
\end{proposition}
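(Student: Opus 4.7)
The plan is to exhibit both sides of the comparison as coming from the same simplicial scheme $Y_\bullet$, so that the passage from the Nisnevich motive to the \'etale motive amounts to applying \'etale sheafification of transfers termwise.

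First, I would upgrade Theorem \ref{motive-construction} to the \'etale topology. Every Nisnevich cover is an \'etale cover, so the \'etale local model structure on $\Delta^{op}PSh(Sm/k)$ is a further Bousfield localization of the Nisnevich local one; in particular, Nisnevich local weak equivalences remain weak equivalences \'etale-locally. The proof of Theorem \ref{motive-construction} itself goes through in the \'etale setting with no change, because the key input, \cite[Remark 2.3]{Holl}, is topology-independent, and because $GL_n$-torsors remain trivial over strictly henselian local rings. Hence the $GL_n$-presentation $p_\bullet: Y_\bullet \to \sX$ is an \'etale local weak equivalence, i.e., $Y_\bullet \simeq \sX$ in the \'etale variant $\sH_\bullet^{\acute{e}t}(k)$ of the unstable $\A^1$-homotopy category.

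Second, I would carry this identification over to $\DM{k,\Z}{\acute{e}t}$. Following \cite{MotivesDM}, the \'etale motive $M^{\acute{e}t}(\sX)$ is the image of $\sX$ under the left derived functor of the composition of $N\Z_{tr}$ with \'etale sheafification of presheaves with transfers, viewed as a left Quillen functor from $\Delta^{op}PSh(Sm/k)$ with the \'etale motivic model structure to complexes of \'etale sheaves with transfers. Applied to the \'etale local equivalence of the previous step this gives $M^{\acute{e}t}(\sX) \simeq N\Z_{tr}^{\acute{e}t}(Y_\bullet)$ in $\DM{k,\Z}{\acute{e}t}$.

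Third, I would recognise the right-hand side as the \'etale sheafification of the Nisnevich motive $M(Y_\bullet) = N\Z_{tr}(Y_\bullet)$ of Remark \ref{remark-gln-presentation}. \'Etale sheafification of presheaves with transfers extends to a left Quillen functor from $K(PST(k,\Z))$ with its Nisnevich motivic model structure to the corresponding category of complexes of \'etale sheaves with transfers, whose total left derived functor realises the comparison $a^\ast_{\acute{e}t}: \DM{k,\Z}{} \to \DM{k,\Z}{\acute{e}t}$. Since each $\Z_{tr}(Y_n)$ is a representable (hence cofibrant) object of $PST(k,\Z)$ whose \'etale sheafification is the \'etale sheaf with transfers represented by $Y_n$, the sheafification is computed termwise and $a^\ast_{\acute{e}t}(N\Z_{tr}(Y_\bullet)) \simeq N\Z_{tr}^{\acute{e}t}(Y_\bullet)$. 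Combining the three steps yields the desired agreement. The only technical point is the Quillen adjunction in the last step, but this is a standard consequence of the fact that sheafification along a morphism of sites is left Quillen between the associated local motivic model structures, so no serious obstacle arises.
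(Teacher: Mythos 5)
Your proposal is correct and essentially matches the paper's argument, which simply applies the étale sheafification functor $\sigma:\DM{k,\Z}{}\to\DM{k,\Z}{\acute{e}t}$ of \cite[Remark 14.3]{MWV} to the Nisnevich local weak equivalence $M(Y_\bullet)\simeq M(\sX)$. Your version unpacks the same idea one layer up — first establishing $Y_\bullet\simeq\sX$ as an étale local weak equivalence of simplicial presheaves and then tracking it through the relevant left Quillen functors — and in doing so makes explicit the termwise identification $\sigma(N\Z_{tr}(Y_\bullet))\simeq N\Z_{tr}^{\acute{e}t}(Y_\bullet)$ that the paper leaves implicit.
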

\begin{proof}
	Let $Y_{\bullet}\rightarrow \sX$ be a $GL_n$-presentation so that $M(Y_{\bullet})\simeq M(\sX)$ in $\DM{k,\Z}{}$. Since sheafification is an exact functor, we have a functor $\sigma: \DM{k,\Z}{}\rightarrow \DM{k,\Z}{\acute{e}t}$ which takes Nisnevich local weak equivalences to \'{e}tale local weak equivalences (see \cite[Remark 14.3]{MWV}). Thus, the Nisnevich local weak equivalence $M(Y_{\bullet})\rightarrow M(\sX)$ becomes an \'{e}tale local weak equivalence after \'{e}tale sheafification. Hence, $M(Y_{\bullet})\simeq M(\sX)$ in $\DM{k,\Z}{\acute{e}t}$.
	
	In fact, more is true. We can show that any $GL_n$-presentation is equivalent to the \v{C}ech nerves associated to a smooth presentation of $\sX$. 
	
	Let $p: U\rightarrow \sX$ be a smooth presentation, and $p_{\bullet}: U_{\bullet}\rightarrow \sX$ be the associated \v{C}ech nerve. Given any strict Hensel local point $\Spec \sO\rightarrow \sX$ consider the base change $p_\sO : U_{\sO}\rightarrow \Spec \sO$. This is a smooth morphism and so admits \'{e}tale local sections. In fact, since $\sO$ is a Hensel local ring, we have a section $\Spec \sO \rightarrow U_{\sO}$ of $p_\sO$. This implies that the induced map of simplicial sets $p_{\bullet}(\sO): U_{\bullet}(\sO)\rightarrow \sX(\sO)$ is a weak equivalence of simplicial sets (proof is similar to Lemma \ref{lemma-principal-bundles-nisnevich-sections}). Thus, $U_{\bullet}\rightarrow \sX$ is an \'{e}tale local weak equivalence. So the induced map on \'{e}tale motives $M(U_{\bullet}) \rightarrow M(\sX)$ is also an \'{e}tale local weak equivalence in $\DM{k, \Z}{\acute{e}t}$ (see \cite[Corollary 2.14]{MotivesDM}). Hence, $M(Y_{\bullet})\simeq M(U_{\bullet})$ in $\DM{k,\Z}{\acute{e}t}$.
\end{proof}

\begin{remark}
In Proposition \ref{proposition-comparison-etale-motive}, the $GL_n$-presentation $Y_{\bullet}$ and the \v{C}ech nerve $U_{\bullet}$ are simplicial objects in the category of \textit{algebraic spaces}. This is because a smooth presentation $p:U\rightarrow \sX$ of an algebraic stack need not be representable by schemes, but only algebraic spaces. However, as any algebraic space admits a Nisnevich presentation by a scheme \cite[Theorem II.6.4]{knutson}, we can refine the \v{C}ech nerve $U_{\bullet}$ to a generalised hypercovering $V_{\bullet}$ such that each $V_i$ is a scheme. Then $M(V_{\bullet})$ computes the motive $M(\sX)$ (see \cite{DHI} for details).
\end{remark}

\begin{remark}[\'{E}tale motives with $\Q$-coefficients]
	Tensoring with $\Q$ gives us a functor $-\otimes \Q: \DM{k,\Z}{\acute{e}t}\rightarrow \DM{k,\Q}{\acute{e}t}$ which is just change of coefficients (this also works in the Nisnevich topology). We write $M(Y_{\bullet})\otimes \Q := M(Y_{\bullet})_{\Q}$. By \cite[Theorem 14.30]{MWV}, the \'{e}tale sheafification functor $\sigma: \DM{k,\Q}{}\rightarrow \DM{k,\Q}{\acute{e}t}$ is an equivalence of categories.\\
	By \cite[Theorem 4.6]{MotivesDM}, for a smooth separated Deligne-Mumford stack $\sX$, $M(\sX)_{\Q}$ is a geometric motive. This does not contradict Remark \ref{motive-not-geometric}, since the cohomology groups of a cyclic group are torsion and will vanish after tensoring with $\Q$.\\
	Further, if $\pi:\sX\rightarrow X$ is the coarse space map, then $M(\pi)_\Q: M(\sX)_\Q\rightarrow M(X)_\Q$ is an isomorphism by \cite[Theorem 3.3]{MotivesDM}. This is clearly false integrally, since for a finite group $G$ over a field $k$, the structure map $BG\rightarrow \Spec k$ is a coarse space map.
\end{remark}

\section{Comparison with Edidin-Graham-Totaro Chow Groups}
\label{section-chow-comparison}

We will now proceed to show that the motivic cohomology groups of the motive defined by Theorem \ref{motive-construction} agree with the (higher) Chow groups defined by Edidin-Graham-Totaro for quotients of smooth quasi-projective schemes in \cite{EG}. This implicitly follows from \cite{krishna2012}, but we write out the details for the sake of completeness.
In what follows we only consider the action of $G:=GL_r$ on \textit{quasi-projective schemes}.

We begin by recalling Totaro's definition of Chow groups for quotient stacks (see \cite[Section 2.2]{EG}). The definition is via a ``Borel type construction". 

Let $X$ be an $n$-dimensional smooth quasi-projective scheme with an action of $GL_r$ and let $[X/GL_r]$ be quotient of this action. Choose an $l$-dimensional representation $V$ of $GL_r$ such that $V$ has an open subset $U$ on which $GL_r$ acts freely and whose complement has codimension greater than $n-i$. Then, we have a principal $GL_r$-bundle $X\times U\rightarrow (X\times U)/GL_r$, and we denote the quotient as $X_{GL_r}:= (X\times U)/GL_r$. Note that $X_{GL_r}$ is a quasi-projective scheme (see \cite[Lemma 2.2]{krishna2012} or \cite[Proposition 23]{EG}). 
\begin{definition}
\label{totaro-chow}
With set-up as above, we define the $i$-th Chow group of $[X/GL_r]$ as 
\[CH^i([X/GL_r]) :=CH^{i}(X_{GL_r}).\]
This definition is independent of the choice of $V$ and $U$ so long as $\codim(V,U)> i$ (see \cite[section 2.2]{EG} for details).
\end{definition}
Using Bloch's cycle complex, we can extend this definition to higher Chow groups in a similar manner.

The following definition is a special case of the one in \cite[Section 4.2]{MV}:

\begin{definition}{\cite[Definition 2.1]{krishna2012}}
	A pair $(V,U)$ of smooth schemes over $k$ is said to be a \textit{good pair} for $G$ if $V$ is a  $k$-rational representation of $G$ and $U\subset V$ is a $G$-invariant open subset on which $G$ acts freely and the quotient $U/G$ is a smooth quasi-projective scheme.\\
	A sequence of pairs $\rho=(V_i,U_i)_{i\geq 1}$ is said to be an \textit{admissible gadget} for $G$ if there exists a good pair $(V,U)$ for $G$ such that $V_i=V^{\oplus i}$ and $U_i\subset V_i$ is a $G$-invariant open subscheme such that the following hold:
	\begin{itemize}
		\item $(U_i\oplus V)\cup (V\oplus U_i)\subseteq U_{i+1}$ as $G$-invariant open subsets.
		\item $codim_{U_{i+2}}(U_{i+2}\setminus (U_{i+1}\oplus V))> codim_{U_{i+1}}(U_{i+1}\setminus (U_{i}\oplus V))$.
		\item $codim_{V_{i+1}}(V_{i+1}\setminus U_{i+1})> codim_{V_i}(V_i\setminus U_i)$.
		\item The action of $G$ on $U_i$ is free, and the quotient is quasi-projective.
	\end{itemize}
\end{definition}

An example of such an admissible gadget can be given as follows. Let $V$ be a $k$-rational representation of $GL_n$, and let $U$ be a $GL_r$-invariant open subset on which $GL_r$ acts freely, and the quotient $U/GL_r$ is a quasi-projective scheme. Then $(V,U)$ is a good pair, and we define an admissible gadget $\rho=(V_i,U_i)_{i\geq 1}$ by taking $U_{i+1}:=(U_i\oplus V)\cup (V\oplus U_i)$.\\
For a quasi-projective scheme $X$ with an action $G$, consider the mixed quotients $X^i(\rho):= X\overset{G}{\times} U_i$, and define $X_G(\rho):=\colim X\overset{G}{\times} U_i$. While $
 X\overset{G}{\times} U_i$ is an ordinary colimit, since it is taken over a filtered category, it is also a homotopy colimit (see \cite[Example 12.3.5]{BK}). Further, we denote by $X^{\bullet}_G$ the simplicial scheme:
\begin{center}
\begin{tikzcd}
  \ldots\arrow[r]\arrow[r, shift left]\arrow[r, shift right]\arrow[r, shift right=1.5ex] & G\times G\times X \arrow[r]\arrow[r,shift left]\arrow[r, shift right] & G\times X\arrow[r,shift left]\arrow[r] & X\\	
\end{tikzcd}
\end{center}
The following lemma relates $X_G(\rho)$ with $X^{\bullet}_G$.

\begin{lemma}{\cite[Proposition 3.2]{krishna2012}}
Let $\rho =(V_i,U_i)_{i\geq 1}$ be an admissible gadget for a linear algebraic group $G$ over $k$. For any quasi-projective $G$-scheme $X$, there is a canonical isomorphism $X_G(\rho)\cong X^{\bullet}_G$ in $\mathcal{H}_{\bullet}(k)$.
\end{lemma}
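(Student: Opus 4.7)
The plan is to realize both sides as ``Borel constructions'' of the $G$-action on $X$, using two different approximations of $EG$ in $\mathcal{H}_{\bullet}(k)$: on one side, the bar construction $G^{\bullet}$ modelling $EG$ yields $X^{\bullet}_G$; on the other, the ind-scheme $\colim_i U_i$ coming from the admissible gadget plays the role of $EG$, and yields $X_G(\rho)$. The comparison will then follow from an $\A^1$-contractibility statement for $\colim_i U_i$ together with a levelwise comparison of the two simplicial schemes.

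Concretely, I would first observe that since $G$ acts freely on $U_i$ it acts freely on $X \times U_i$, and by quasi-projectivity the quotient $X \times^G U_i$ exists as a smooth scheme and $X \times U_i \to X \times^G U_i$ is a $G$-torsor. The \v{C}ech nerve of this torsor is the bar construction $(X \times U_i)^{\bullet}_G := G^{\bullet} \times X \times U_i$. Since in the applications $G = GL_r$, arguing exactly as in Lemma \ref{lemma-principal-bundles-nisnevich-sections} (using triviality of $GL_r$-torsors over henselian local rings), this \v{C}ech nerve is Nisnevich-locally weakly equivalent to $X \times^G U_i$. The $G$-equivariant projection $X \times U_i \to X$ then induces a termwise map of simplicial schemes $\pi_i : (X \times U_i)^{\bullet}_G \to X^{\bullet}_G$ whose degree-$n$ component is the projection $G^n \times X \times U_i \to G^n \times X$. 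Taking the filtered colimit in $i$ and using that filtered colimits of Nisnevich-local weak equivalences remain weak equivalences, we obtain
\[ X_G(\rho) \;\simeq\; \colim_i (X \times U_i)^{\bullet}_G \;\xrightarrow{\pi_{\infty}}\; X^{\bullet}_G, \]
and the task reduces to showing that $\pi_{\infty}$ is an $\A^1$-weak equivalence.

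The main obstacle is the $\A^1$-contractibility of $\colim_i U_i$ in $\mathcal{H}_{\bullet}(k)$. The admissibility axioms force $\codim_{V_i}(V_i \setminus U_i) \to \infty$, and each $V_i$ is $\A^1$-contractible; the standard Morel--Voevodsky argument \cite[\S 4.2]{MV} converts this geometric codimension estimate into the required $\A^1$-homotopical contractibility of $\colim_i U_i$. Granting this, in each simplicial degree $n$ the map $G^n \times X \times \colim_i U_i \to G^n \times X$ is an $\A^1$-weak equivalence (projection off a contractible factor), so $\pi_{\infty}$ is a levelwise---hence global---weak equivalence of simplicial presheaves, completing the argument. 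The only further bookkeeping required is to commute the $i$-colimit past the simplicial realization and past the augmentation of the bar construction, but this is routine once one knows that cofibrations and local weak equivalences in the Nisnevich motivic model structure are stable under filtered colimits.
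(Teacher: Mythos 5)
The paper offers no proof of this lemma --- it simply cites \cite[Proposition 3.2]{krishna2012} --- so there is no internal argument to compare against. Your strategy is the standard one and essentially reproduces the expected proof: introduce the intermediate bar constructions $(X\times U_i)^{\bullet}_G$, identify them \v{C}ech-descent-theoretically with $X\times^G U_i$, pass to the filtered colimit in $i$, and then collapse the $\colim_i U_i$ factor using its $\A^1$-contractibility, which is exactly what the admissibility axioms buy you via the Morel--Voevodsky codimension argument in \cite[\S 4.2]{MV}. The colimit bookkeeping is routine as you say, and levelwise $\A^1$-weak equivalences of simplicial presheaves do pass to realizations, so your reduction is sound.

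The one place requiring care is flagged in your own parenthetical ``Since in the applications $G=GL_r$.'' The augmentation $(X\times U_i)^{\bullet}_G \to X\times^G U_i$ is a \emph{Nisnevich}-local weak equivalence precisely because the $G$-torsor $X\times U_i \to X\times^G U_i$ has Nisnevich-local sections; that is the defining feature of a \emph{special} group in the sense of Serre, such as $GL_r$ or $SL_r$. For an arbitrary linear algebraic group --- $\mu_n$, $PGL_n$, a nonabelian finite group --- these torsors are in general only \'etale-locally trivial, and then the bar construction $X^{\bullet}_G$ models the Nisnevich homotopy quotient while $X_G(\rho)$ computes the \'etale one; the two genuinely differ (this is the gap between $B_{\mathrm{gm}}G$ and the simplicial $BG$ studied in \cite[\S 4]{MV}). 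Your argument therefore establishes the lemma for special $G$, which is the only case this paper actually uses in Section~\ref{section-chow-comparison}, but it does not prove the statement in the generality of ``arbitrary linear algebraic group'' as the hypotheses are worded. If you want the general case you must either restrict the hypothesis to special $G$ or check Krishna's original argument for how this point is handled there.
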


The above lemma gives us a comparison theorem between (higher) Chow groups and motivic cohomology (see also \cite[Theorem 3.5]{Joshua2} for a version in the \'{e}tale topology with $\Q$-coefficients).


\begin{proof}[Proof of Theorem \ref{chow-comparison}]
	Note that $GL_r\times X$ is isomorphic to $X\times_{\sX}X$. Thus, the simplicial scheme $X^{\bullet}_{GL_r}$ is isomorphic to the \v{C}ech nerve $X_{\bullet}\rightarrow\sX$. By Theorem \ref{motive-construction}, $X_{\bullet}\rightarrow\sX$ is a Nisnevich local equivalence.
	
	Let $\rho=(V_i,U_i)$ be an admissible gadget for $GL_r$, with $\dim (V)=l$, $m:=r^2$. By definition, 
	\[CH^i(\sX, 2i-n)=CH^{i}(X^N(\rho),2i-n) \simeq Hom_{\DM{k,\Z}{}}(M(X^N(\rho)),\Z(i)[n])\] 
	for $N$ sufficiently large. Here, the second equivalence follows from Equation \ref{equation-chow-comparison} and the fact that $H^{n,i}(X,\Z)=Hom_{\DM{k,\Z}{}}(M(X), \Z(i)[n])$. Note that this is well-defined since the maps $X^s(\rho)\rightarrow X^t(\rho)$ are vector bundles and so have the same Chow groups by $\A^1$-invariance. 
	
	Now, as $X_{GL_r}(\rho)=\underset{N}{\colim} X^N(\rho)$ is a filtered colimit, by \cite[Example 12.3.5]{BK} it is also a homotopy colimit. Then, by Remark \ref{remark-motive-commutes-with-homotopy-colimits},
	\begin{align*}
	Hom_{\DM{k,\Z}{}}(M(X_{GL_r}(\rho)), \Z(i)[n])&= 
	Hom_{\DM{k,\Z}{}}(M\big(\underset{N}{\hocolim} X^N(\rho)\big),\Z(i)[n])\\
	&\simeq Hom_{\DM{k,\Z}{}}\big(\underset{N}{\hocolim}M(X^N(\rho)),\Z(i)[n]\big)\\
	&\simeq \underset{N}{\holim}Hom_{\DM{k,\Z}{}}\big(M(X^N(\rho)),\Z(i)[n]\big).
	\end{align*}
	Since the maps $X^n(\rho)\rightarrow X^m(\rho)$ are $\A^1$-invariant, the groups $Hom_{\DM{k,\Z}{}}\big(M(X^N(\rho)),\Z(i)[n]\big)$ stabilise for all $m\geq N$. Thus, 
	\[Hom_{\DM{k,\Z}{}}(M(X_G(\rho)), \Z(i)[n])\simeq Hom_{\DM{k,\Z}{}}\big(M(X^N(\rho)),\Z(i)[n]\big)\]
	whenever $N$ is large enough. Further, we also have the relations,
	\[H^{n,i}(\sX, \Z)=Hom_{\DM{k,\Z}{}}(M(\sX),\Z(i)[n])\simeq Hom_{\DM{k,\Z}{}}(M(X_{\bullet}), \Z(i)[n])\]
	in $\DM{k,\Z}{}$. By the previous lemma, $X_G(\rho)$ and $X_{\bullet}$ are isomorphic in $\mathcal{H}_{\bullet}(k)$. Putting these together, we get required isomorphism.
\end{proof}

\begin{remark}
	Equivariant algebraic cobordisms (see \cite{heller13}, \cite{krishna2012}) are defined by a Borel type construction analogous to definition of equivariant (higher) Chow groups.
	By the above considerations, one can think of equivariant algebraic cobordism as an algebraic cobordism of the associated quotient stack.
\end{remark}

\begin{remark}
 In fact, Theorem \ref{chow-comparison} also holds when $X$ is a smooth scheme or more generally an algebraic space that is the quotient of a smooth scheme by $GL_n$. The proof uses a modified version of \cite[Proposition 3.2]{krishna2012} which holds for all algebraic spaces (See \cite[\S 6.1.2, 6.1.3]{deshmukh2021}.
\end{remark}

\section{Various Triangles}
\label{section-various-triangles}

We now establish Nisnevich descent and blow-up sequence for the Nisnevich motive and also prove the projective bundle formula. As a consequence of the projective bundle formula, we get a Gysin triangle for cd-quotient stacks. These result are already known for \'{e}tale motives (see \cite{MotivesDM}). All the arguments in this section are directly adapted from their \'{e}tale counterparts in \cite{MotivesDM} $-$ except for the projective bundle formula. The argument for the projective bundle formula in the \'{e}tale case relies on the identification of the Picard group with $H_{\acute{e}t}^2(\sX,\Z(1))$. This identification fails for stacks if \'{e}tale topology is replaced by Nisnevich topology. So we adopt a different approach using homotopical descent.

In this section, we work exclusively with cd-quotient stacks (however, see Remark \ref{remark-smooth-nisnevich}).

\begin{remark}\label{BaseChange}
Let $\sZ$ be a cd-quotient stack. If $\sY\rightarrow\sZ$ is a representable morhpism, then $\sY$ is also a cd-quotient stack. To see this, let $Z\rightarrow [Z/GL_n]\rightarrow\sZ$ be a Nisnevich covering of $\sZ$ by a quotient stack. Now, observe that we have a cartesian diagram by base change,
\begin{center}
\begin{tikzcd}
	\sY\times_{\sZ}Z\arrow[r]\arrow[d] & Z\arrow[d]\\
	\sY\times_{\sZ}[Z/GL_n]\arrow[r]\arrow[d] & \left[ Z/GL_n\right]\arrow[d]\\
	\sY\arrow[r] & \sZ
\end{tikzcd}
\end{center}
where $\sY\times_{\sZ}Z\rightarrow \sY\times_{\sZ} [Z/GL_n]$ is a $GL_n$-torsor and $\sY\times_{\sZ} [Z/GL_n]\rightarrow\sY$ is a Nisnevich cover. Denote the algebraic space $\sY\times_{\sZ}Z$ by $Y$. Thus, $\sY$ is a cd-quotient stack. Hence, by Theorem \ref{motive-construction}, $Y_{\bullet}\rightarrow \sY$ is a Nisnevich local weak equivalence.\\
This tells us that \textit{$GL_n$-presentations} respect base change.
\end{remark}

\begin{proposition}\label{nisnevich-descent} For a distinguished Nisnevich square,
\begin{center}
\begin{tikzcd}
	\sW\arrow[r]\arrow[d] & \sY\arrow[d,"p"]\\
	\sX\arrow[r,hook,"j"]	&\sZ
\end{tikzcd}
\end{center}
where $j$ is an open immersion and $p$ is \'{e}tale representable, the induced diagram on motives is homotopy cartesian.
\end{proposition}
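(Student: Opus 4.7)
The plan is to reduce the statement to ordinary Nisnevich descent for motives of schemes via a compatible family of $GL_n$-presentations across the square, and then reassemble everything using the fact that $M$ commutes with homotopy colimits and that distinguished triangles in a triangulated category are preserved under homotopy colimits. To set things up, note first that by Remark \ref{BaseChange}, since both $p$ and $j$ are representable, each of $\sW, \sY, \sX$ inherits the cd-quotient property from $\sZ$. Choose a $GL_n$-presentation $Z \to \sZ$ afforded by Theorem \ref{motive-construction}, and form the base changes $Y := \sY \times_{\sZ} Z$, $X := \sX \times_{\sZ} Z$, $W := \sW \times_{\sZ} Z$. These are $GL_n$-presentations of the respective stacks, and a short fiber-product identity shows that at each simplicial level $i$ the associated \v{C}ech hypercovers take the form $Y_i \cong \sY \times_{\sZ} Z_i$, $X_i \cong \sX \times_{\sZ} Z_i$, $W_i \cong \sW \times_{\sZ} Z_i$. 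By Theorem \ref{motive-construction}, each of $Z_\bullet, Y_\bullet, X_\bullet, W_\bullet$ is a Nisnevich local weak equivalence onto the corresponding stack.

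At each level $i$, the square
\[
\begin{tikzcd}
W_i \arrow[r]\arrow[d] & Y_i \arrow[d]\\
X_i \arrow[r] & Z_i
\end{tikzcd}
\]
is the base change of the original distinguished Nisnevich square along $Z_i \to \sZ$. Since being cartesian, being an open immersion, being \'etale, and the excision isomorphism $p^{-1}(\sZ \setminus \sX)_{\red} \cong (\sZ \setminus \sX)_{\red}$ are all stable under base change, what I obtain at each level is again a distinguished Nisnevich square of algebraic spaces. After refining these hypercovers to generalised hypercovers by schemes, as in the remark following Proposition \ref{proposition-comparison-etale-motive}, I may assume each $W_i, Y_i, X_i, Z_i$ is a smooth scheme. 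Standard Nisnevich descent for motives of schemes (see \cite{MWV}) then yields, for each $i$, a Mayer--Vietoris distinguished triangle in $\DM{k,\Z}{}$:
\[ M(W_i) \to M(X_i) \oplus M(Y_i) \to M(Z_i) \to M(W_i)[1]. \]

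Finally, by Remark \ref{remark-motive-commutes-with-homotopy-colimits} the functor $M$ commutes with homotopy colimits, so $M(\sZ) \simeq \hocolim_i M(Z_i)$ and similarly for the other three stacks. Because distinguished triangles in $\DM{k,\Z}{}$ are preserved under homotopy colimits, the levelwise triangles assemble into a single distinguished triangle
\[ M(\sW) \to M(\sX) \oplus M(\sY) \to M(\sZ) \to M(\sW)[1], \]
which is precisely the assertion that the induced square on motives is homotopy cartesian. The step I expect to require the most care is the first one: ensuring that the four $GL_n$-presentations fit together into a simplicial diagram that is levelwise a distinguished Nisnevich square. This is arranged by choosing a \emph{single} presentation $Z \to \sZ$ at the outset and obtaining the others by base change, so that compatibility is built into the construction.
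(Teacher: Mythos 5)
Your proof is correct and follows essentially the same strategy as the paper: base change a single $GL_n$-presentation of $\sZ$ across the square using Remark~\ref{BaseChange}, observe that each simplicial level yields a distinguished Nisnevich square of (algebraic spaces/)schemes, invoke Nisnevich descent for schemes, and then assemble via Remark~\ref{remark-motive-commutes-with-homotopy-colimits} and the fact that homotopy colimits preserve homotopy cocartesian squares. The only cosmetic difference is that you phrase the final step in terms of Mayer--Vietoris distinguished triangles rather than homotopy cocartesian squares, and you make explicit the refinement from algebraic spaces to schemes, which the paper leaves implicit; note that your slogan ``distinguished triangles are preserved under homotopy colimits'' should be understood at the level of the underlying model category (cofiber sequences), since naked triangulated categories do not in general admit well-behaved colimits.
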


\begin{proof}
For a distinguished Nisnevich square of stacks
\begin{center}
\begin{tikzcd}
	\sW\arrow[r]\arrow[d] & \sY\arrow[d,"p"]\\
	\sX\arrow[r,hook,"j"]	&\sZ
\end{tikzcd}
\end{center}
we need to show that the following induced diagram of motives is homotopy catesian 
\begin{center}
	\begin{tikzcd}
		M(\sW)\arrow[r,]\arrow[d,] & M(\sY)\arrow[d,"p"]\\
		M(\sX)\arrow[r,"j"]	&M(\sZ).
	\end{tikzcd}
\end{center}
For this we argue as follows:
if $Z\rightarrow [Z/GL_n]\rightarrow\sZ$ is a Nisnevich covering of $\sZ$ by a $GL_n$-torsor, then by Remark \ref{BaseChange}, we can base change this covering to $\sY,\sX$ and $\sW$. This gives us a cartesian diagram of hypercovers:
\begin{center}
\begin{tikzcd}
	W_{\bullet}\arrow[r]\arrow[d] & Y_{\bullet}\arrow[d,"p_{\bullet}"]\\
	X_{\bullet}\arrow[r,"j_{\bullet}"]	&Z_{\bullet}
\end{tikzcd}
\end{center}
Thus, for each $i$, we have a cartesian diagram,
\begin{center}
	\begin{tikzcd}
		W_{i}\arrow[r]\arrow[d] & Y_{i}\arrow[d,"p_{i}"]\\
		X_{i}\arrow[r,"j_{i}"]	&Z_{i}.
	\end{tikzcd}
\end{center}
Note that the map $j_i: X_i:=\sX\times_{\sZ} Z_i\rightarrow Z_i$ is a base change of the map $j:\sX\rightarrow \sZ$, and hence is an open immersion. By similar reasoning, the map $p_i: Y_i:=\sY\times_{\sZ} Z_i\rightarrow Z_i$ is \'{e}tale. Moreover, we have an isomorphisms $(Z_i\setminus X_i)\simeq (\sZ\setminus \sX)\times_{\sZ} Z_i$ and $p^{-1}(\sZ\setminus \sX)\times_{\sZ}Z_i\simeq p_i^{-1}(Z_i\setminus X_i)$, as reduced closed subschemes. By hypothesis, we have a Nisnevich distinguished square of stacks. Hence, $p^{-1}(\sZ\setminus \sX)\simeq \sZ\setminus \sX$ which implies that $p_i^{-1}(Z_i\setminus X_i)\simeq Z_i\setminus X_i$. This shows that, for each $i$, the above the diagram of smooth schemes is a distinguished Nisnevich square.\\
Thus, for each $i$, the following diagram of motives is homotopy (co)cartesian,
\begin{center}
\begin{tikzcd}
	M(W_i)\arrow[r]\arrow[d] & M(Y_i)\arrow[d]\\
	M(X_i)\arrow[r]	&M(Z_i)
\end{tikzcd}
\end{center}
By Remark \ref{remark-motive-commutes-with-homotopy-colimits}, $M(Z_{\bullet})\simeq \hocolim M(Z_i)$. As homotopy colimits commute with homotopy colimits, the following diagram is again homotopy (co)cartesian,
\begin{center}
\begin{tikzcd}
	M(W_{\bullet})\arrow[r]\arrow[d] & M(Y_{\bullet})\arrow[d]\\
	M(X_{\bullet})\arrow[r]	&M(Z_{\bullet})
\end{tikzcd}
\end{center}
By Theorem \ref{motive-construction}, we get the required result.
\end{proof}

\begin{theorem}[Projective Bundle Formula]
\label{projective-bundle-formula}
Let $\sE$ be a vector bundle of rank $n+1$ on a stack $\sX$. There exists a canonical isomorphism in $\DM{k,\Z}{}$:
\[M(\Proj(\sE))\rightarrow\bigoplus_{i=0}^n M(\sX)(i)[2i]\]
\end{theorem}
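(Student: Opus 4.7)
The plan is to bootstrap the classical projective bundle formula for smooth schemes using a $GL_n$-presentation and homotopical descent, thereby sidestepping the \'etale-only identification of $\Pic$ with $H^2(-,\Z(1))$.

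Fix a $GL_n$-presentation $p_\bullet : Y_\bullet \to \sX$ provided by Theorem \ref{motive-construction}. The projection $\pi : \Proj(\sE) \to \sX$ is representable (in fact schematic over $\sX$), so base change along $p_\bullet$ produces a simplicial smooth scheme $\Proj(\sE_\bullet)$, where $\sE_i := p_i^*\sE$ is a rank $n+1$ vector bundle on $Y_i$. Remark \ref{BaseChange} shows that $\Proj(\sE)$ is itself a cd-quotient stack and that $\Proj(\sE_\bullet) \to \Proj(\sE)$ is a $GL_n$-presentation, hence a Nisnevich local weak equivalence. Since $M$ commutes with homotopy colimits (Remark \ref{remark-motive-commutes-with-homotopy-colimits}), we obtain
\[ M(\Proj(\sE)) \simeq \hocolim_{\Delta^{op}} M(\Proj(\sE_\bullet)) \quad\text{and}\quad M(\sX) \simeq \hocolim_{\Delta^{op}} M(Y_\bullet). \]

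For each simplicial level $i$, the classical projective bundle formula in $\DM{k,\Z}{}$ for the smooth scheme morphism $\pi_i : \Proj(\sE_i) \to Y_i$ supplies a canonical isomorphism
\[ \phi_i : \bigoplus_{j=0}^n M(Y_i)(j)[2j] \xrightarrow{\sim} M(\Proj(\sE_i)), \]
whose $j$-th component is the composition of $M(\pi_i)^*$ with cup product by the $j$-th power of the first Chern class of the tautological line bundle $\sO_{\Proj(\sE_i)}(1)$. Because $\sO(1)$ commutes with pullback of projective bundles and Chern classes are natural, the maps $\phi_i$ assemble into a level-wise weak equivalence of simplicial objects in $\DM{k,\Z}{}$.

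Applying $\hocolim_{\Delta^{op}}$ and commuting it past the finite direct sum and the Tate twist (both of which are left adjoints up to shift, hence preserve homotopy colimits) yields
\[ \bigoplus_{j=0}^n M(\sX)(j)[2j] \xrightarrow{\sim} M(\Proj(\sE)), \]
and its inverse is the canonical map of the statement. The main point to verify is that the $\phi_i$ genuinely form a morphism of simplicial motives; this reduces to the naturality of $\sO(1)$ and of Chern classes under arbitrary pullbacks of vector bundles along morphisms of smooth schemes, which is standard but is precisely the input that replaces the \'etale-topological argument of \cite{MotivesDM} in our setting.
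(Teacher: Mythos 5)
Your proposal is correct and follows essentially the same route as the paper: base change the projective bundle along a $GL_n$-presentation, apply the classical projective bundle formula for smooth schemes levelwise, and then take homotopy colimits using that $M$ and derived tensor are left Quillen. The one thing you spell out that the paper leaves implicit is the naturality of $\sO(1)$ and Chern classes under pullback, which is exactly what is needed for the levelwise isomorphisms to assemble into a map of simplicial objects.
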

\begin{proof}
	As projective bundle formula is known for smooth schemes by \cite[Theorem 15.12]{MWV}, we will deduce the result for stacks by a homotopical descent argument. To make such a homotopical descent argument, we need to ensure that homotopy colimits commute with $M$ and derived tensor. But both $M$ and derived tensor are derived functors of left Quillen functors, so Lemma \ref{homotopy-colimits-left-adjoints} ensures that this is true.
	 
	Let $p:\Proj(\sE)\rightarrow \sX$ be the projective bundle, and $\sO(1)$ the canonical line bundle on it. This construction behaves well with respect to base change. If $U_{\bullet}\rightarrow \sX$ is a $GL_n$-presentation, then by base change we get projective bundles $p_i:V_i \rightarrow U_i$ for every $i$, and line bundles $\sO(1)_{V_i}$ on $V_i$ by pullback. Moreover, by Remark \ref{BaseChange}, the \v{C}ech nerve $V_{\bullet}\rightarrow \Proj(\sE)$ is a $GL_n$-presentation of $\Proj(\sE)$. As each $p_i: V_i\rightarrow U_i$ is a projective bundle, by the projective bundle formula (see \cite[Theorem 15.12]{MWV}), we have:
	\begin{equation}
	M(V_i) \simeq \oplus_{j=0}^n M(U_i) \otimes \Z(j)[2j],
	\end{equation}
	in $\DM{k,\Z}{}$.
	
	Since $M$ and derived tensor are left Quillen, using Lemma \ref{homotopy-colimits-left-adjoints}, we have
	\begin{align*}
	M(\Proj(\sE)) \simeq  M( \hocolim V_i )	&\simeq \hocolim (M( V_i ))\\
	&\simeq \hocolim (\oplus_{j=0}^n M( U_i ) \otimes \Z(j)[2j])\\
	&\simeq \oplus_{j=0}^n (M( \hocolim U_i ) \otimes \Z(j)[2j])\\
	&\simeq \oplus_{j=0}^n M(\sX) \otimes \Z(j)[2j],
	\end{align*}
	as required.
\end{proof}

\begin{remark}
In fact, the above theorem works for any simplicial presheaf $\sX$. This was pointed out to us by a referee. By \cite[\S 2]{dug}, any simplicial presheaf is quasi-isomorphic to a simplicial presheaf which is levelwise a disjoint union of representables. That is, we have a quasi-isomorphism $X_{\bullet}\rightarrow \sX$, such that each $X_i$ is a disjoint union of schemes. Now the proof above works verbatim.\\
The only missing piece is to define an appropriate notion of a vector bundle and projective bundle for simplicial presheaves. For this, one can proceed as follows: for any simplicial presheaf $\sX$, a principal $GL_n$-bundle is given by a morphism $\sX\rightarrow BGL_n$. In fact, the homotopy fibre product $\sX\times_{BGL_n}^h \Spec k:=\sY$ defines a the total space of the principal $GL_n$-bundle over $\sX$. Let $V$ be an $n$-dimensional vector space over $k$. The associated fibre space $\sY\times^{GL_n} V:= \sE$ gives a vector bundle on $\sX$. 

Similarly, if we take the fibre space associated to $\P(V)$, we get a projective bundle $\Proj(\sE)\rightarrow \sX$ for any simplicial presheaf. These constructions are compatible with base change and agree with the usual notions of vector bundles and projective bundles when $\sX$ is an algebraic stack. 

\end{remark}

\begin{proposition}
	Let $\sZ\subset \sX$ be a smooth closed substack of $\sX$. Let $Bl_{\sZ}(\sX)$ denote the blow-up of $\sX$ in the centre $\sZ$, and $\sE$ be the exceptional divisor. Then we have a canonical distinguised triangle:
	\[M(\sE)\rightarrow M(\sZ)\oplus M(Bl_{\sZ}(\sX))\rightarrow M(\sX)\rightarrow M(\sE)[1]\]
\end{proposition}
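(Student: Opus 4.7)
The plan is to reduce the blow-up triangle for the stack $\sX$ to the classical blow-up triangle for smooth schemes via a homotopical descent argument along a $GL_n$-presentation, in direct analogy with the strategy used for Theorem~\ref{projective-bundle-formula}.

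First, I would choose a $GL_n$-presentation $p_\bullet : U_\bullet \to \sX$ as in Theorem~\ref{motive-construction}, with $U := U_0$ an algebraic space and the structure map $U \to \sX$ representable and smooth (hence flat). Setting $W := U \times_{\sX} \sZ$, the induced map $W \hookrightarrow U$ is a smooth closed immersion, and by Remark~\ref{BaseChange} the base-changed simplicial algebraic space $W_\bullet := U_\bullet \times_{\sX} \sZ$ is a $GL_n$-presentation of $\sZ$. Since blow-ups commute with flat base change, we get $Bl_{W_i}(U_i) \cong U_i \times_{\sX} Bl_{\sZ}(\sX)$ with exceptional divisors $E_i \cong U_i \times_{\sX} \sE$ at every simplicial level; applying Remark~\ref{BaseChange} once more identifies $Bl_{W_\bullet}(U_\bullet)$ and $E_\bullet$ as $GL_n$-presentations of $Bl_{\sZ}(\sX)$ and $\sE$ respectively. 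By Theorem~\ref{motive-construction} these simplicial schemes therefore compute the respective motives.

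At each level $i$ we have a smooth closed immersion $W_i \hookrightarrow U_i$ of smooth algebraic spaces, and the classical blow-up triangle yields a distinguished triangle
\[M(E_i) \to M(W_i) \oplus M(Bl_{W_i}(U_i)) \to M(U_i) \to M(E_i)[1]\]
in $\DM{k,\Z}{}$. Taking $\hocolim_i$ and using that $M$ commutes with homotopy colimits (Remark~\ref{remark-motive-commutes-with-homotopy-colimits}), together with the identifications above, produces the desired triangle for $\sX$. Naturality of the blow-up construction ensures that the simplicial maps assemble the level-wise triangles into a single triangle of simplicial objects, so the passage to the homotopy colimit is valid.

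The main obstacle is that Voevodsky's blow-up triangle is classically stated for smooth schemes, whereas the $U_i$ are only algebraic spaces. This is handled by the device already used in the paper after Proposition~\ref{proposition-comparison-etale-motive}: each of the simplicial algebraic spaces $U_\bullet,\, W_\bullet,\, Bl_{W_\bullet}(U_\bullet),\, E_\bullet$ admits a further Nisnevich refinement to a generalised hypercover by schemes without changing its motive, so the scheme-level triangle becomes applicable. A secondary but routine point is flat base change for blow-ups along the smooth representable morphism $U \to \sX$, which ensures that blow-up and exceptional divisor formation genuinely commute with the $GL_n$-presentation at every level.
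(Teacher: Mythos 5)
Your proposal is correct and follows essentially the same route as the paper's (very terse) proof: the paper chooses a $GL_n$-torsor covering $X \to [X/GL_n] \to \sX$, notes that $Bl_{\sZ}(\sX) \to \sX$ is projective hence representable so the covering can be base-changed to $Bl_{\sZ}(\sX)$, $\sZ$, and $\sE$, and then says the rest is as in the proof of Nisnevich descent (level-wise classical triangle, then pass to homotopy colimits). You have simply made explicit the two points the paper leaves implicit — that blow-ups commute with the flat base change along $U_i \to \sX$, and that the algebraic-space levels can be refined to schemes so Voevodsky's blow-up triangle applies — both of which are used elsewhere in the paper and are handled correctly.
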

\begin{proof}
	Let $X\rightarrow [X/GL_n]\rightarrow\sX$ be a Nisnevich covering of $\sX$ by a $GL_n$-torsor. Since the morphism $Bl_{\sZ}(\sX)\rightarrow\sX$ is projective, it is representable. Then, we can base change $X$ to $Bl_{\sZ}(\sX),\sZ$ and $\sE$. The rest of the proof is the same as Proposition \ref{nisnevich-descent}.
\end{proof}

\begin{theorem} \label{blow-up-formula}
	Let $\sX$ be a smooth stack and $\sZ \subset \sX$ be a smooth closed substack of pure codimension $c$. Then,
	\[M(Bl_{\sZ}(\sX))\simeq M(\sX)\oplus_{i=0}^{c-1} M(\sZ)(i)[2i]\]
\end{theorem}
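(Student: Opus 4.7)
The plan is to combine the blow-up triangle of the previous proposition with the projective bundle formula (Theorem \ref{projective-bundle-formula}), and to complete the argument by descending to the case of smooth schemes via a $GL_n$-presentation. First I would observe that, since $\sZ\subset\sX$ is a smooth closed substack of pure codimension $c$, the exceptional divisor is canonically identified as $\sE\cong\Proj(\sN_{\sZ/\sX})$, where $\sN_{\sZ/\sX}$ is the normal bundle of $\sZ$ in $\sX$, a vector bundle of rank $c$. Both claims can be checked after base change along a $GL_n$-presentation of $\sX$, as the blow-up and the normal bundle construction are compatible with flat (hence Nisnevich) base change, and by Remark \ref{BaseChange} a $GL_n$-presentation of $\sX$ induces compatible $GL_n$-presentations of $\sZ$, $\sE$, and $Bl_{\sZ}(\sX)$ (using that $Bl_{\sZ}(\sX)\to\sX$ is projective, hence representable).

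Applying Theorem \ref{projective-bundle-formula} to $\sE\cong\Proj(\sN_{\sZ/\sX})$ yields
\[ M(\sE)\simeq \bigoplus_{i=0}^{c-1} M(\sZ)(i)[2i], \]
in which the $i=0$ summand is $M(\sZ)$ itself, and the map $\pi_*:M(\sE)\to M(\sZ)$ induced by the projection of the projective bundle corresponds to the projection onto this summand. In the blow-up triangle
\[ M(\sE)\to M(\sZ)\oplus M(Bl_{\sZ}(\sX))\to M(\sX)\to M(\sE)[1], \]
the $M(\sZ)$-component of the first map is precisely $\pi_*$, so under the decomposition above it restricts to the identity on the $i=0$ summand. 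A standard matrix manipulation in a triangulated category then lets me cancel the $M(\sZ)$ factor on both sides, producing a distinguished triangle
\[ \bigoplus_{i=1}^{c-1} M(\sZ)(i)[2i] \to M(Bl_{\sZ}(\sX)) \to M(\sX) \to \bigoplus_{i=1}^{c-1} M(\sZ)(i)[2i+1]. \]

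The main obstacle is to upgrade this distinguished triangle into a direct sum decomposition, equivalently to produce a splitting of $M(Bl_{\sZ}(\sX))\to M(\sX)$. I would handle this by Nisnevich descent: fix the $GL_n$-presentation $X_\bullet\to\sX$ above together with the induced presentations $Z_\bullet$, $E_\bullet$, and $Bl_{Z_\bullet}(X_\bullet)$. For each level, the classical blow-up formula for smooth schemes (proved via Gysin triangles and the projective bundle formula in \cite{MWV}) yields a splitting of the analogous triangle, and this splitting is natural with respect to morphisms of pairs $(Z,X)$ of smooth schemes. Hence it descends to a simplicial splitting, and since $M$ commutes with homotopy colimits (Remark \ref{remark-motive-commutes-with-homotopy-colimits}), these level-wise splittings assemble into a global splitting in $\DM{k,\Z}{}$, giving the desired isomorphism.
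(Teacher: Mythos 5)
Your decomposition of $M(\sE)$ via the projective bundle formula and the Gaussian-elimination step cancelling the $M(\sZ)$ factor are fine, and indeed the paper implicitly performs the same cancellation. The genuine gap is the final step: you assert that the level-wise blow-up splittings for the smooth (simplicial) schemes $Bl_{Z_\bullet}(X_\bullet)$ ``assemble into a global splitting'' because they are natural with respect to morphisms of pairs and $M$ commutes with homotopy colimits. Naturality of the splitting \emph{in the homotopy category} $\DM{k,\Z}{}$ is not enough: a family of maps $M(X_i)\to M(Bl_{Z_i}(X_i))$ that are sections only up to homotopy and that commute with the simplicial face/degeneracy maps only up to homotopy does not automatically produce a map out of (or into) the homotopy colimit; one must exhibit either a strict (point-set level, or at least coherent) family of sections, or directly show that the connecting map into $\bigoplus_{i\geq 1}M(\sZ)(i)[2i+1]$ vanishes. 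The paper takes the second route: it shows directly at the level of stacks that the connecting map $M(\sX)\to M(\sE)[1]$ in the blow-up triangle is zero, by blowing up $\sZ\times\{0\}$ in $\sX\times\A^1$ and comparing with the inclusion at $\{1\}$ (a deformation-to-the-normal-cone argument following the scheme case in \cite{MotivesDM,FSV}). Once the connecting map is zero, the triangle splits and the projective bundle formula for $\sE\cong\Proj(\sN_{\sZ/\sX})$ finishes the proof. I would recommend replacing your descent step with this vanishing argument, or else supplying the missing coherence statement (e.g., an honest zig-zag of simplicial-level maps realizing the splitting) before invoking Remark \ref{remark-motive-commutes-with-homotopy-colimits}.
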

\begin{proof}
	Using the previous result, we have a canonical distinguished triangle:
	\[M(\sE)\rightarrow M(\sZ)\oplus M(Bl_{\sZ}(\sX))\rightarrow M(\sX)\rightarrow M(\sE)[1],\]
	where $p:Bl_{\sZ}(\sX)\rightarrow\sX$ is the blow-up. The exceptional divisor is the projectivisation of the normal bundle $N_{\sZ}(\sX)$ of $Z$ in $\sX$, i.e, $\sE\simeq \Proj(N_{\sZ}(\sX))$. If $M(\sX)\rightarrow M(\sE)[1]$ is zero, then the projective bundle formula for $\Proj(N_{\sZ}(\sX))$ gives us the result. 
	
	To prove that $M(\sX)\rightarrow M(\sE)[1]$ is zero, we argue exactly as in \cite[Theorem 3.7]{MotivesDM} (see also \cite[Chapter 5, Proposition 3.5.3]{FSV}). Take $\sX\times \A^1$ and consider the blow-up along $\sZ\times \lbrace 0\rbrace$. We have a map $q:Bl_{\sZ\times\lbrace 0\rbrace}(\sX\times \A^1)\rightarrow \sX\times \A^1$. Consider the morphism of exact triangles,
	\begin{center}
	\begin{tikzcd}
		M(\sE)\arrow[r]\arrow[d] & M(q^{-1}(\sZ\times\lbrace 0\rbrace))\arrow[d]\\
		M(\sZ)\oplus M(Bl_{\sZ}(\sX))\arrow[r]\arrow[d] & M(\sZ\times \lbrace 0\rbrace)\oplus M(Bl_{\sZ\times \lbrace 0\rbrace}(\sX\times \A^1))\arrow[d,"f"]\\
		M(\sX)\arrow[r,"s_0"]\arrow[d,"g"] & M(\sX\times \A^1)\arrow[d,"h"]\\
		M(\sE)[1]\arrow[r,"a"] & M(q^{-1}(\sZ\times\lbrace 0\rbrace))[1]\\
	\end{tikzcd}
	\end{center}
	By the projective bundle formula, the morphism $a$ is split injective, and $s_0$ is an isomorphism. Hence, to show that $g$ is zero, it suffices to show that $h$ is zero. To see this, note that the composition
	\[M(\sX\times \lbrace 1\rbrace)\rightarrow M(Bl_{\sZ\times \lbrace 0\rbrace}(\sX\times \A^1))\rightarrow M(\sX\times \A^1)\]
	is an isomorphism. This implies that $f$ admits a section so $h$ must be zero.
\end{proof}

Given a smooth separated stack over a field, it is a difficult problem to determine whether it is a quotient stack, or equivalently, whether it has the resolution property \cite{Tot, GrossRes}. The following corollary shows that while the resolution property may be hard to establish, the motive of every cd-quotient stack is a direct summand of the motive of a quotient stack. In a sense, this says that the intersection theory of cd-quotient stacks can be ``captured" by quotient stacks to some extent.

\begin{corollary}
	Let $\sX$ be a smooth separated Deligne-Mumford  stack over a field of characteristic zero. Then the motive of $\sX$ is the direct summand of the motive of a quotient stack.
\end{corollary}

\begin{proof}
	 The motive of a smooth separated Deligne-Mumford stack over a field of characteristic zero is a direct summand of a smooth separated Deligne-Mumford stack with quasi-projective coarse moduli space by \cite[Theorem 4.3]{MotivesDM} and Theorem \ref{blow-up-formula}. Now, \cite[Theorem 4.4]{KreschGeometry}
 completes the proof.
\end{proof}

\begin{definition}
For a map $M(X)\rightarrow M(Y)$ of motives of stacks (or simplicial schemes), we denote the cone by
\[M\Big(\frac{X}{Y}\Big):= cone(M(X)\rightarrow M(Y))\;\; \text{in} \;\DM{k,\Z}{}.\]
\end{definition}

\begin{lemma}
	Let $f:\sX'\rightarrow\sX$ be an \'{e}tale representable morphism of algebraic stacks, and let $\sZ\subset \sX$ be a closed substack such that $f$ induces an isomorphism $f^{-1}(\sZ)\simeq \sZ$. Then the canonical morphism
	\[M\Big(\frac{\sX'}{\sX'\setminus\sZ}\Big)\rightarrow M\Big(\frac{\sX}{\sX\setminus\sZ}\Big)\]
	is an isomorphism.
\end{lemma}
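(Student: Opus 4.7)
The plan is to deduce the statement directly from Nisnevich descent established in Proposition \ref{nisnevich-descent}. The key observation is that the data in the hypothesis organizes into a distinguished Nisnevich square whose corners are exactly the stacks appearing in the cones. Concretely, consider the commutative square
\[
\begin{tikzcd}
\sX' \setminus \sZ \ar[r] \ar[d] & \sX' \ar[d,"f"] \\
\sX \setminus \sZ \ar[r,hook,"j"] & \sX
\end{tikzcd}
\]
where I use the isomorphism $f^{-1}(\sZ) \simeq \sZ$ implicit in the hypothesis to identify $\sX' \setminus \sZ$ with $\sX' \setminus f^{-1}(\sZ)$.

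First I would verify that this is a distinguished Nisnevich square of cd-quotient stacks. The map $j$ is an open immersion because $\sZ$ is closed in $\sX$; the map $f$ is \'{e}tale representable by hypothesis. The square is cartesian since $f^{-1}(\sX\setminus \sZ) = \sX'\setminus f^{-1}(\sZ) = \sX'\setminus \sZ$, and $f$ restricts to an isomorphism $f^{-1}(\sZ) \to \sZ$ on the closed complement by assumption. By Remark \ref{BaseChange}, representability of $f$ together with the cd-quotient standing hypothesis of the section ensures that $\sX'$, $\sX \setminus \sZ$, and $\sX'\setminus \sZ$ are all cd-quotient stacks, so Definition \ref{definition-motive-stack} provides well-behaved motives for all four corners.

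Applying Proposition \ref{nisnevich-descent} to this square yields a homotopy cartesian, and hence (since $\DM{k,\Z}{}$ is triangulated) homotopy cocartesian, diagram of motives
\[
\begin{tikzcd}
M(\sX' \setminus \sZ) \ar[r] \ar[d] & M(\sX') \ar[d] \\
M(\sX \setminus \sZ) \ar[r] & M(\sX).
\end{tikzcd}
\]
Taking cones of the two horizontal arrows produces an isomorphism between them, and by the definition of $M(-/-)$ introduced just before the lemma, this is precisely the canonical morphism $M(\sX'/\sX'\setminus \sZ) \to M(\sX/\sX \setminus \sZ)$.

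I do not anticipate any serious obstacle, since every input has already been developed earlier in the section: descent is Proposition \ref{nisnevich-descent}, stability of the cd-quotient property under representable base change is Remark \ref{BaseChange}, and the passage from homotopy cocartesian squares to isomorphisms on cones is formal in a triangulated category. The only point warranting care is the verification that the square above is cartesian as stacks (not merely on underlying sheaves of groupoids), but this is immediate once we interpret $f^{-1}(\sZ)\simeq\sZ$ as an isomorphism of closed substacks of $\sX'$ lying over the inclusion $\sZ \hookrightarrow \sX$.
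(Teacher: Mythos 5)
Your proof is correct, and it takes a genuinely different (and more economical) route than the paper's. The paper re-runs the $GL_n$-presentation argument: it chooses a presentation $U_\bullet\to\sX$, base changes to $V_\bullet\to\sX'$, observes that each $f_i\colon V_i\to U_i$ is an \'etale map of algebraic spaces satisfying the scheme-level excision hypothesis around $Z_i:=\sZ\times_\sX U_i$, invokes Voevodsky's excision for schemes from \cite{FSV}, and then passes to homotopy colimits via Theorem \ref{motive-construction}. You instead observe that the hypotheses of the lemma are literally a distinguished Nisnevich square of cd-quotient stacks and apply Proposition \ref{nisnevich-descent} directly, then pass to cones of the horizontal arrows using the fact that in the stable category a homotopy cartesian square is homotopy cocartesian. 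Since Proposition \ref{nisnevich-descent} was itself established by the same $GL_n$-presentation technique, your argument packages the content once and reuses it, avoiding the duplication in the paper's proof; it also makes the identification of the map on cones with the ``canonical morphism'' transparent, since both come from the same commuting square. The one point you correctly flag as needing care --- that the square is cartesian as stacks --- is exactly where the hypothesis $f^{-1}(\sZ)\simeq\sZ$ is used, and your reading of it as an isomorphism of closed substacks over $\sZ\hookrightarrow\sX$ is the right one.
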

\begin{proof}
	Let $U_{\bullet}\rightarrow\sX$ be a $GL_n$-presentation. Let $V_{\bullet}:=U_{\bullet}\times_{\sX} \sX'$ be a $GL_n$-presentation of $\sX'$ obtained by base change. We have a map of simplicial sets $f_{\bullet}:V_{\bullet}\rightarrow U_{\bullet}$ induced by the map $f:\sX' \rightarrow \sX$. Note that for every simplicial degree, $f_i$ is \'{e}tale and that $f_i^{-1}(\sZ\times_{\sX} U_i)\simeq \sZ\times_{\sX} U_i$. Let $Z_i$ denote the base change $\sZ\times_{\sX}U_i$. Then, by \cite[Chapter 3, Proposition 5.18]{FSV}, the canonical morphism
	\[M\Big(\frac{V_{\bullet}}{V_{\bullet}\setminus Z_{\bullet}}\Big)\rightarrow M\Big(\frac{U_{\bullet}}{U_{\bullet}\setminus Z_{\bullet}}\Big).\]
	is an isomorphism. Now, Theorem \ref{motive-construction} gives us the required result.
\end{proof}

\begin{corollary}
	Let $p:V\rightarrow\sX$ be a vector bundle of rank $n$ over an algebraic stack. Denote by $s:\sX\rightarrow V$ the zero section. Then
	\[M\Big(\frac{V}{V\setminus s}\Big)\simeq M(\sX)(d)[2d].\]
\end{corollary}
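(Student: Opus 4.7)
Assuming $d=n$, the plan is to reduce to the classical Thom space isomorphism for vector bundles over schemes via the $GL_n$-presentation technique of Theorem~\ref{motive-construction}. Since $p:V\to\sX$ is representable (vector bundles on stacks being affine over the base), so is the open immersion $V\setminus s(\sX)\hookrightarrow V$, and both $V$ and $V\setminus s(\sX)$ are therefore cd-quotient stacks by Remark~\ref{BaseChange}. Choose any $GL_m$-presentation $p_\bullet:U_\bullet\to\sX$ and base change along $p$ to obtain $GL_m$-presentations $V_\bullet:=V\times_\sX U_\bullet\to V$ and $(V\setminus s)_\bullet\to V\setminus s$; by construction $V_i\to U_i$ is a rank-$n$ vector bundle of algebraic spaces over a scheme with induced zero section $s_i$, and $(V\setminus s)_i = V_i\setminus s_i(U_i)$.

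At each simplicial degree $i$, the classical Thom space isomorphism in $\DM{k,\Z}{}$ (see for instance \cite[Chapter 5, Proposition 3.5.2]{FSV}, obtained from the projective bundle formula applied to $\Proj(V_i\oplus\sO_{U_i})$ together with excision at the zero section) yields a canonical isomorphism
$$M\Big(\frac{V_i}{V_i\setminus s_i}\Big)\simeq M(U_i)(n)[2n].$$
Because this construction is natural in the base $U_i$ (the projective bundle formula and excision both being so), these equivalences assemble into a levelwise equivalence of simplicial objects in $\DM{k,\Z}{}$.

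It remains to pass to the colimit. By Remark~\ref{remark-motive-commutes-with-homotopy-colimits}, $M$ commutes with homotopy colimits; Tate twist and shift are exact auto-equivalences of $\DM{k,\Z}{}$ and hence also commute with them; and mapping cones in a stable setting are themselves homotopy pushouts, so they commute with other homotopy colimits. Combining these observations we obtain
$$M\Big(\frac{V}{V\setminus s}\Big)\simeq \underset{i}{\hocolim}\,M\Big(\frac{V_i}{V_i\setminus s_i}\Big)\simeq \underset{i}{\hocolim}\,M(U_i)(n)[2n]\simeq M(\sX)(n)[2n],$$
where the first and last equivalences are instances of Theorem~\ref{motive-construction}. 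The only real obstacle is verifying that the scheme-theoretic Thom isomorphism is sufficiently natural to pass through the simplicial bookkeeping; this is automatic, since it is constructed as a cofiber arising from the projective bundle formula, for which we have already established a stack-level version (Theorem~\ref{projective-bundle-formula}) by exactly the same homotopical descent argument.
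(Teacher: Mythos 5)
Your proof is essentially correct, but it takes a different route than the paper does, and the paper's route is shorter given what has already been set up. You descend to a $GL_m$-presentation $U_\bullet$, apply the scheme-level Thom isomorphism (excision plus projective bundle formula) at each simplicial degree, and then pass to the homotopy colimit using Remark~\ref{remark-motive-commutes-with-homotopy-colimits}, together with the facts that Tate twist, shift, and cones all commute with homotopy colimits. The paper instead works entirely at the stack level: the lemma immediately preceding the corollary (an excision statement for \'etale representable morphisms that restrict to an isomorphism over the closed substack) is applied to the open immersion $V\hookrightarrow\Proj(V\oplus\sO)$, giving
\[M\Big(\frac{V}{V\setminus s}\Big)\simeq M\Big(\frac{\Proj(V\oplus\sO)}{\Proj(V\oplus\sO)\setminus s}\Big),\]
and then the stack-level projective bundle formula (Theorem~\ref{projective-bundle-formula}) identifies the right-hand side with $M(\sX)(d)[2d]$. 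In effect, you re-run the homotopical descent that was already baked into the excision lemma and the projective bundle formula, whereas the paper simply invokes those two stack-level results. Your approach is more self-contained but repeats work; the paper's is shorter and makes clearer how the corollary depends on the two preceding statements. Two small points worth tightening in your write-up: (i) the assembly of the levelwise Thom isomorphisms into a morphism of simplicial diagrams does require naturality of the Thom class, which you gesture at but should state explicitly as compatibility with the face and degeneracy maps; (ii) Tate twist on effective motives is a left adjoint (tensoring with $\Z(1)$) rather than an auto-equivalence, which is the correct reason it commutes with homotopy colimits.
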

\begin{proof}
	From the previous lemma we have an isomorphism
	\[M\Big(\frac{V}{V\setminus s}\Big)\simeq M\Big(\frac{\Proj(V\oplus \sO)}{\Proj(V\oplus \sO\setminus s)}\Big),\]
	and we have 
	\[M\Big(\frac{\Proj(V\oplus \sO)}{\Proj(V\oplus \sO\setminus s)}\Big)\simeq M(\sX)(d)[2d]\]
	from the projective bundle formula (see \cite[Lemma 3.9]{MotivesDM} for details).
\end{proof}

\begin{theorem}[Gysin Triangle]
	\label{gysin-triangle}
	Let $\sZ \subset \sX$ be a smooth closed substack of codimension c. Then there exists a Gysin triangle:
	\[M(\sX \setminus \sZ)\rightarrow M(\sX)\rightarrow M(\sZ)(c)[2c] \rightarrow M(\sX \setminus \sZ)[1].\]
\end{theorem}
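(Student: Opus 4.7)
The plan is to exploit the $GL_n$-presentation machinery developed in Section~\ref{section-motive-construction} and reduce the statement to the Voevodsky Gysin triangle for smooth schemes (\cite[Chapter 14]{MWV}), in exactly the spirit of the proofs of the projective bundle formula (Theorem~\ref{projective-bundle-formula}) and the blow-up sequence given above. First I would observe that, by the very definition of the cofiber $M(\sX/(\sX\setminus\sZ))$, the triangle
\[M(\sX\setminus\sZ)\to M(\sX)\to M\Big(\tfrac{\sX}{\sX\setminus\sZ}\Big)\to M(\sX\setminus\sZ)[1]\]
is distinguished in $\DM{k,\Z}{}$, so it suffices to produce a canonical isomorphism $M(\sX/(\sX\setminus\sZ))\simeq M(\sZ)(c)[2c]$.

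To construct it, I would pick a $GL_n$-presentation $p_{\bullet}:U_{\bullet}\to\sX$ and base change the closed immersion $\sZ\hookrightarrow\sX$ along it to obtain a simplicial closed immersion $Z_{\bullet}:=\sZ\times_{\sX}U_{\bullet}\hookrightarrow U_{\bullet}$, in which each $Z_i\hookrightarrow U_i$ is a smooth closed pair of codimension $c$ (smoothness and codimension being preserved because each $p_i$ is smooth). By Remark~\ref{BaseChange}, the complementary open $(U\setminus Z)_{\bullet}\to\sX\setminus\sZ$ and $Z_{\bullet}\to\sZ$ are themselves $GL_n$-presentations, so Theorem~\ref{motive-construction} furnishes motivic equivalences $M((U\setminus Z)_{\bullet})\simeq M(\sX\setminus\sZ)$ and $M(Z_{\bullet})\simeq M(\sZ)$. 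In each simplicial degree, the Voevodsky Gysin isomorphism yields $M(U_i/(U_i\setminus Z_i))\simeq M(Z_i)(c)[2c]$; these equivalences are natural with respect to the \'{e}tale Cartesian face and degeneracy maps of $U_{\bullet}$ (this naturality is built into the construction of the Gysin map via deformation to the normal cone combined with \'{e}tale excision), and therefore assemble into an equivalence of simplicial objects. Applying $M$ and using that it commutes with homotopy colimits (Remark~\ref{remark-motive-commutes-with-homotopy-colimits}) then gives
\[M\Big(\tfrac{\sX}{\sX\setminus\sZ}\Big)\simeq \underset{i}{\hocolim}\; M\Big(\tfrac{U_i}{U_i\setminus Z_i}\Big)\simeq \underset{i}{\hocolim}\; M(Z_i)(c)[2c]\simeq M(\sZ)(c)[2c],\]
which, combined with the distinguished triangle above, completes the proof.

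The main anticipated obstacle is the simplicial coherence in the assembly step, that is, verifying that the scheme-level Gysin isomorphisms commute with the face and degeneracy maps of $U_{\bullet}$; this reduces to the naturality of the Voevodsky Gysin map under Cartesian \'{e}tale pullbacks of smooth pairs. A conceptually cleaner but more technically involved alternative would be to perform deformation to the normal cone directly at the stack level: form the stack $D:=Bl_{\sZ\times \{0\}}(\sX\times \A^1)$ with the proper transform of $\sX\times \{0\}$ removed, so that its fibers over $1$ and $0$ recover $\sX$ and the normal bundle $N_{\sZ}(\sX)$ respectively, with $\sZ\times \A^1\hookrightarrow D$ sitting as a smooth codimension-$c$ closed substack. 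Two applications of the preceding \'{e}tale excision lemma would then identify $M(\sX/(\sX\setminus\sZ))$ with $M(N_{\sZ}(\sX)/(N_{\sZ}(\sX)\setminus\sZ))$, after which the vector bundle corollary delivers the identification with $M(\sZ)(c)[2c]$; blow-ups of stacks pose no essential difficulty here because, as in the blow-up formula above, they are controlled via base change along a $GL_n$-presentation.
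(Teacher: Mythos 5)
The paper's proof is a one-line deferral to \cite[Theorem 3.10]{MotivesDM}, and the two preparatory results placed immediately before the theorem --- the \'{e}tale excision lemma and the Thom isomorphism $M(V/(V\setminus s))\simeq M(\sX)(d)[2d]$ --- are exactly the ingredients for performing deformation to the normal cone directly at the level of the stack. In other words, the paper's route is essentially what you sketch as the ``conceptually cleaner but more technically involved alternative'', not your primary argument. (One small correction to your sketch of that alternative: the identifications $M(\sX/(\sX\setminus\sZ))\simeq M(D/(D\setminus\sZ\times\A^1))\simeq M(N_{\sZ}\sX/(N_{\sZ}\sX\setminus\sZ))$ do not come from two applications of the \'{e}tale excision lemma alone, since the fibre inclusions $\sX\times\{1\}\hookrightarrow D$ and $N_{\sZ}\sX\hookrightarrow D$ are closed immersions, not \'{e}tale maps; one also needs $\A^1$-invariance of the cofibre of the deformation space over $\A^1$.)

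Your primary argument --- assembling the scheme-level Gysin isomorphisms across the simplicial degrees of a $GL_n$-presentation --- is a genuinely different route, in the style of the Nisnevich-descent and blow-up arguments earlier in the section. Two issues. First, a factual slip: the face and degeneracy maps of $U_{\bullet}$ are \emph{smooth}, not \'{e}tale, because $X\rightarrow\sX$ factors through a $GL_n$-torsor, which is smooth of positive relative dimension. Naturality of the Gysin map does hold under transverse (in particular smooth) Cartesian pullbacks of smooth closed pairs, so the conclusion survives, but the stated reason is wrong. Second and more substantively, the step you flag as the ``main anticipated obstacle'' is a genuine gap, not a formality: each levelwise isomorphism $M(U_i/(U_i\setminus Z_i))\simeq M(Z_i)(c)[2c]$ lives in the homotopy category $\DM{k,\Z}{}$, and naturality of such isomorphisms as morphisms in $\DM{k,\Z}{}$ does \emph{not} by itself let you ``assemble into an equivalence of simplicial objects'': one needs strict (or homotopy-coherent) compatibility with the face and degeneracy maps at the level of complexes of presheaves with transfers, e.g.\ a functorial zig-zag of quasi-isomorphisms realizing the Gysin map in each simplicial degree, compatible with the structure maps, so that each leg of the zig-zag is a levelwise weak equivalence and hence induces an equivalence on homotopy colimits. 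Such a strictly functorial model exists, but supplying it is precisely the work your proposal defers; the stack-level deformation argument that the paper uses avoids this coherence problem altogether, which is why it is the route taken.
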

\begin{proof}
	Note that we have an exact triangle
	\[M(\sX \setminus \sZ)\rightarrow M(\sX)\rightarrow M\Big(\frac{\sX}{\sX\setminus\sZ}\Big) \rightarrow M(\sX \setminus \sZ)[1].\]
	So it suffices to show that $M\big(\frac{\sX}{\sX\setminus\sZ}\big)\simeq M(Z)(c)[2c]$ in $\DM{k,\Z}{}$. The argument is exactly as in \cite[Theorem 3.10]{MotivesDM}.
\end{proof}

\section{Applications to motivic cohomology of algebraic stacks}\label{section-applications-motivic-cohomology}

The significance of Theorem \ref{motive-construction} and other results in this article lies in the fact that they provide a language for us to discuss Chow groups of stacks in manner analogous to the discussion in \cite{MWV} for smooth schemes. For instance, in Theorem \ref{theorem-hypercohomology} we show that the motivic cohomology of cd-quotient stacks can be computed as hypercohomology of motivic complexes. For quotient stacks, Theorem \ref{chow-comparison} implies that the same holds for higher Chow groups. This approach to Chow groups admits further generalisation, and in \cite{khan2021generalized} Chow groups have been connected with invariants in Voevodsky's stable homotopy category.

Once this language connecting Chow groups to hypercohomology is made available, we proceed with the study of cohomology of motivic complexes with $\Z$ and other coefficients. It is natural to ask if analogues of well-known and important theorems hold in this setting. In Corollary \ref{corollary-bl}, we prove a Beilinson-Lichtenbaum type result for stacks. Such a result was previously inaccessible because of the absence of a good notion of Nisnevich motive for stacks.

\subsection{Motivic cohomology as hypercohomology}

For a smooth scheme $X$ over a perfect field, consider the hypercohomology groups $\mathbb{H}^p_{Zar}(X,\Z(q))$ of the motivic complexes $\Z(q)$ in the category complexes of abelian sheaves on the Zariski (or Nisnevich) site of $X$. By \cite[Proposition 14.16]{MWV}, we have the following
\[Hom_{\DM{k,Z}{}}(X,\Z(q)[p])\simeq \mathbb{H}^p_{Zar}(X,\Z(q))\simeq \mathbb{H}^p_{Nis}(X,\Z(q)),\]
thus showing that both Zariski and Nisnevich hypercohomologies of the complexes $\Z(q)$ compute the motivic cohomology groups of $X$.

In this subsection, we establish a similar result relating the motivic cohomology of a cd-quotient stack $\sX$ to the hypercohomology of the motivic complexes on the smooth-Nisnevich site of $\sX$.

\begin{definition}[Smooth-Nisnevich site]
	Let $\sX$ be an algebraic stack. The smooth-Nisnevich site of $\sX$, denoted by $\sX_{\text{lis-nis}}$ is the category whose objects consist of pairs $(U,p)$ where $p$ is a smooth morphism $p:U\rightarrow \sX$  from an algebraic space $U$. Coverings in $\sX_{\text{lis-nis}}$ are given by Nisnevich covering of algebraic spaces.
\end{definition}

Let $DA(\sX)$ denote the category of Morel-Voevodsky motives over $\sX$. This category is constructed analogously as in the case of schemes done in \cite{ayoub-hopf}. The motivic complexes $\Z(j)$ can be thought of as objects in $DA(\sX)$ by restricting along the structure map $\sX\rightarrow\Spec k$.

\begin{theorem}\label{theorem-hypercohomology}
	The motivic cohomology of $\sX$ agrees with the hypercohomology of the motivic complexes $\Z(j)$ on $\sX_{\text{lis-nis}}$. That is,
	\[Ext^i_{D(\sX)}(\Z,\Z(j)|_\sX)\simeq Ext^i_{DA(\sX)}(\Z,\Z(j)|_\sX)\simeq Hom_{\DM{k,\Z}{}}(M(\sX),\Z(j)[i]),\]
	where $\Z$ denotes the constant sheaf $\Z$ on the $\sX_{\text{lis-nis}}$.
\end{theorem}

\begin{proof}
	To prove this result, we will use the existence of $g_\#$ for a map $g: U_{\bullet}\rightarrow \Spec k$ of simplicial sheaves. This theory has been developed in \cite{ayoubthesis2}.
	
	The first equality follows from the fact that $\Z(j)$ are $\A^1$-local complexes.
	
	Let $p: U_{\bullet}\rightarrow\sX$ be a $GL_n$-presentation.  By Lemma \ref{lemma-principal-bundles-nisnevich-sections} the morphism $p: U_{\bullet}\rightarrow \sX$ is a Nisnevich local weak equivalence over $\sX$ and, therefore, $\Z(U_{\bullet})\simeq \Z$ in $DA(\sX)$. Thus, we have
	\[Ext^i_{DA(\sX)}(\Z,\Z(j)|_\sX)= Ext^i_{DA(\sX)}(\Z(U_{\bullet}),\Z(j)|_\sX).\]
	Let $g:U_{\bullet}\rightarrow \Spec k$ and $h: \sX\rightarrow \Spec k$ be the structure maps. Since $\Z(j)$ in $DA(\sX)$ is the pullback of motivic sheaf $\Z(j)$ over $k$, we have,
	
	\begin{align*}
	Ext^i_{DA(\sX)}	(\Z(U_{\bullet}),\Z(j)|_\sX) & \cong Ext^i_{DA(\sX)}(p_\# \Z|_{U_{\bullet}},\Z(j)|_\sX)\\
	&\cong Ext^i_{DA(U_{\bullet})}(\Z|_{U_{\bullet}},g^*\Z(j))\\
	&\cong  Ext^i_{DA(k)}(g_\#(\Z|_{U_{\bullet}}),\Z(j))\\
	&\cong  Ext^i_{DA(k)}(\Z(U_{\bullet}),\Z(j)).
	\end{align*}
	Since $\Z(j)$ are $\A^1$-local complexes, we have that
	\[Ext^i_{DA(k)}(\Z(U_{\bullet}),\Z(j)) \simeq Ext^i_{D(k)}(\Z(U_{\bullet}),\Z(j)) \simeq Hom_{\DM{k,\Z}{}}(M(U_{\bullet}),\Z(j)[i]).\]
	
	Now, the result follows since $U_{\bullet}\rightarrow \sX$ is a local weak equivalence in $\sH(k)$.
\end{proof}

\subsection{Beilinson-Lichtenbaum for stacks}
In this subsection, we establish a Beilinson-Lichtenbaum type result for cd-quotient stacks. While the result is a straightforward corollary of the existing results known for schemes, it requires the existence of a notion of Nisnevich motivic cohomology for stacks. This is guaranteed by Theorem \ref{motive-construction}.

For a cd-quotient stack $\sX$, let $H^{p,q}_M (\sX,\Z/n\Z):= Hom_{\DM{k,\Z}{}}(\sX,\Z/n\Z(q)[p])$ denote the motivic cohomology with $\Z/n\Z$ coefficients. Let $\mu_n$ denote the sheaf of $n$-roots of unity.

\begin{corollary}\label{corollary-bl}
	Let $\sX$ be a cd-quotient stack. Then the homomorphisms
	\[H^{p,q}_M (\sX,\Z/n\Z)\rightarrow H^p_{\acute{e}t}(\sX,\mu_n^{\otimes q}),\]
	are isomorphisms for $p\leq q$ and monomorphisms for $p=q+1$.
\end{corollary}
\begin{proof}
	First note that we have an equivalence, by Theorem \ref{motive-construction},
	\[Hom_{\DM{k,\Z}{}}(\sX,\Z/n\Z(q)[p])= Hom_{\DM{k,\Z}{}}(U_{\bullet},\Z/n\Z(q)[p])\]
	By \cite[Theorem 6.17]{voevodsky-finite-coefficients}, the change of topology morphisms 
	\[Hom_{\DM{k,\Z}{}}(U_{\bullet},\Z/n\Z(q)[p]) \rightarrow Hom_{\DM{k,\Z}{\acute{e}t}}(U_{\bullet},\Z/n\Z(q)[p])\]
	are isomorphisms for $p\leq q$ and monomorphisms for $p=q+1$.\\
	By \cite[Proposition 1.26]{ayoub-hopf}, we can remove transfers on the right hand-side of the above equality to get	
	\[Hom_{\DM{k,\Z}{\acute{e}t}}(U_{\bullet},\Z/n\Z(q)[p])= Hom_{\mathbf{DA}(k)}(U_{\bullet},\mu_n^{\otimes q}[p]).\]
	By \cite[Proposition 1.26]{ayoub-hopf}, this computes \'{e}tale cohomology with $\mu_n^{\otimes q}$-coefficients of the simplicial scheme $U_{\bullet}$. That is,
	\[Hom_{\mathbf{DA}(k)}(U_{\bullet},\mu_n^{\otimes q}[p])\simeq H_{\acute{e}t}^p(U_{\bullet},\mu_n^{\otimes q}).\]
	Finally, again by Theorem \ref{motive-construction}, we have isomorphisms
	\[H_{\acute{e}t}^p(U_{\bullet},\mu_n^{\otimes q})\simeq H_{\acute{e}t}^p(\sX,\mu_n^{\otimes q}).\]
\end{proof}

\subsection{Gersten complex for Deligne-Mumford stacks} Let $\sX$ be a smooth Deligne-Mumford stack. Consider the Zariski site $\sX_{Zar}$ of $\sX$. Let $\sH^{p,q}(-):= Hom(-, \Z(q))$ denote the motivic cohomology sheaves on $\sX_{Zar}$. For a point $x\in |\sX|$, let $\sG_x$ denote the residual gerbe \cite{LMB} at $x$. This is a locally closed substack of $\sX$. Let $i_x:\sG_x\to \sX$ denote the canonical morphism. Then we have a sequence
\begin{equation}\label{equation-gersten-stacks}
0\rightarrow \sH^{p,q}\overset{\partial_0}{\rightarrow} \underset{\codim (\sG_x)=0}{\bigoplus}i_{x*} H^{p,q}(\sG_x) \overset{\partial_1}{\rightarrow} \underset{\codim (\sG_y)=1}{\bigoplus}i_{y*} H^{p-1,q-1}(\sG_y)\overset{\partial_2}{\rightarrow}\ldots
\end{equation}
where we define the constant sheaf $H^{p,q}(\sG_x):=\underset{x\in \sU}{\colim} H^{p,q}(\sU)$, for every smooth open substacks $\sU\subset \sX$ and $x\in \sX$ a generic point. We will now describe what boundary maps are. 

Note the map $\partial_0$ is simply induced by the projections $\sH^{p,q}(\sU)\rightarrow H^{p,q}(\sG_x)$ whenever $x\in \sU$.
To describe the subsequent maps, it suffices to describe the case when $\sX$ is irreducible and $\sG_x\subset \sX$ has codimension one. The groups $H^{p,q}(\sG_x)$ are defined as a limit over smooth neighbourhoods of the point $x$. By shrinking $\sX$, we can assume that $\sX$ is smooth and that we can reduce to a pair $(\sV,\sZ)$, where $\sV\subset \sX$ is an open substack and $\sZ:= \sX\setminus \sV$. Then using the Gysin triangle (Theorem \ref{gysin-triangle}), we have a map,
\[H^{p,q}(\sV)\rightarrow H^{p-1,q-1}(\sZ)\]
which induces the map $\partial_1$.

As with schemes, the sequence (\ref{equation-gersten-stacks}) is a chain complex by construction. We ask the following question which will be addressed in a subsequent work:

\begin{q}
	Is the complex (\ref{equation-gersten-stacks}) exact?
\end{q}

\begin{remark}
	The complex (\ref{equation-gersten-stacks}) is meaningful because we have a notion of the Nisnevich motive by Theorem \ref{motive-construction}. In the theory of higher Chow groups for group actions on schemes, Gersten complexes are constructed using techniques of equivariant localisation. The advantage of the Gersten complex (\ref{equation-gersten-stacks}) is that methods of sheaf cohomology become readily available to us.
\end{remark}

\section{Application to Exhaustive stacks}
\label{section-exhaustive}
In \cite{hoskins2019}, a motive $M_{exh}$ is defined for a class of stacks which they call as \textit{exhaustive stacks}. They do this by using an idea similar to Totaro's ``finite-dimensional approximation" technique in \cite{TotaroChow}. Examples of exhaustive stacks are quotient stacks and the moduli stack of vector bundles on a curve of fixed rank and degree. In fact, exhaustive stacks turn out to be special cases of cd-quotient stacks (see Lemma \ref{proposition-filtration-by-quotients}). We will compare the motive $M_{exh}$ with the motive of Definition \ref{definition-motive-stack}.

In this section, we adopt the conventions used in \cite{hoskins2019} for algebraic stacks. In particular this means that we work with stacks $\sX$ which admit a smooth atlas $p:U\rightarrow \sX$ by a locally finite type $k$-scheme $U$ such that $p$ is schematic (representable by \textit{schemes})\footnote{This is only to maintain consistency with the conventions in \cite{hoskins2019}. It does not particularly affect the arguments that we present, which work for any stack locally of finite type over $k$.}.

\begin{definition}[\!\! {\cite[Definition 2.15]{hoskins2019}}]
	Let $\sX$ be an algebraic stack locally of finite type over a field $k$. Let $\sX_0 \subset \sX_1\subset \ldots$ be an increasing filtration of $\sX$ such $\sX_i\subset \sX$ are quasi-compact open substacks and their union covers $\sX$, i.e, $\sX=\cup_i \sX_i$. Then an \textit{exhaustive sequence of vector bundles} with respect to this filtation is a sequence of pairs $\{(V_i,W_i)\}_{i\geq 0}$ where $V_i$ is a vector bundle on $\sX_i$ and $W_i\subset V_i$ is a closed substack such that
	\begin{enumerate}
		\item the complement $U_i:= V_i\setminus W_i$ is a separated $k$-scheme of finite type,
		\item we have injective maps of vector bundles $f_{i,i+1}:V_i\rightarrow V_{i+1}\times_{\sX_{i+1}}\sX_i$ such that $f_{i,i+1}^{-1}(W_{i+1}\times_{\sX_{i+1}}\sX_i)\subset W_i$ and,
		\item the codimension of $W_i$ in $V_i$ tends to infinity as $i$ increases.
	\end{enumerate}
	A stack admitting an exhaustive sequence with respect to some filtration is said to be \textit{exhaustive}.
\end{definition}

In fact, one can show that every exhaustive stack admits a filtration by global quotient stacks. This implies that it is a cd-quotient stack.

\begin{lemma}
	\label{proposition-filtration-by-quotients}
	Let $\sX$ be an exhaustive stack. Let $\sX=\cup_i\sX_i$  be an increasing filtration with an exhaustive sequence of vector bundles. Then there exists an increasing filtration $\sX=\cup_i\sY_i$ with $\sY_i\subseteq \sX_i$ and each $\sY_i$ is a global quotient stack. In particular, it is a cd-quotient stack.
\end{lemma}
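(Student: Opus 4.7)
The plan is to define $\sY_i := \operatorname{image}(U_i \to \sX_i)$, the image of the composition $U_i \hookrightarrow V_i \to \sX_i$; since $V_i \to \sX_i$ is smooth surjective (being a vector bundle) and $U_i \hookrightarrow V_i$ is open, $\sY_i$ is automatically an open substack of $\sX_i$. Three things must then be verified: (a) each $\sY_i$ is a global quotient stack, (b) the $\sY_i$ are increasing, and (c) $\bigcup_i \sY_i = \sX$.

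For (a), the key observation is that the inertia of the total space of $V_i$ at a point $(x,v)$ equals the stabilizer $\operatorname{Stab}_{\Aut(x)}(v)$. Since $U_i$ is a scheme, this stabilizer is trivial whenever $(x,v) \in U_i$. Consequently, for each $x \in \sY_i$ there exists some $v \in V_i|_x$ with trivial $\Aut(x)$-stabilizer, which forces the action of $\Aut(x)$ on the entire fiber $V_i|_x$ to be faithful (any element acting trivially on $V_i|_x$ would fix $v$, hence be the identity). This fiberwise faithful inertia action on $V_i|_{\sY_i}$ ensures that the frame bundle $Y_i := \operatorname{Isom}_{\sY_i}(\sO_{\sY_i}^{n_i}, V_i|_{\sY_i})$ has trivial inertia and is thus representable by an algebraic space, giving the required presentation $\sY_i = [Y_i/GL_{n_i}]$. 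For (b), the compatibility maps $f_{i,i+1}$ from the exhaustive sequence restrict to inclusions $U_i \hookrightarrow U_{i+1}$ over $\sX_i$ (precisely the content of $f_{i,i+1}^{-1}(W_{i+1} \times_{\sX_{i+1}} \sX_i) \subseteq W_i$), whence $\sY_i \subseteq \sY_{i+1}$.

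The main obstacle is part (c). Given $x \in \sX$, choose $i_0$ with $x \in \sX_{i_0}$ and suppose for contradiction that $x \notin \sY_i$ for all $i \geq i_0$. Then the scheme-theoretic fiber $V_i|_x$ lies in $W_i$, and since $V_i|_{\overline{\{x\}}}$ is irreducible (being a vector bundle over the irreducible $\overline{\{x\}}$) and $W_i$ is closed in $V_i$, we deduce $V_i|_{\overline{\{x\}}} \subseteq W_i$. Thus some component of $W_i$ has codimension in $V_i$ at most $\codim(V_i|_{\overline{\{x\}}}, V_i) = \dim \sO_{\sX_i, x}$, the latter equality following from flatness of $V_i \to \sX_i$. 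But this local dimension is an invariant of the local ring at $x$, hence independent of $i$ for $i \geq i_0$ since $\sX_i \subseteq \sX_{i+1}$ is open; this contradicts the hypothesis that $\codim(W_i, V_i) \to \infty$. Therefore $x \in \sY_i$ for $i$ sufficiently large, so $\bigcup_i \sY_i = \sX$, completing the proof and showing in particular that every exhaustive stack is a cd-quotient stack.
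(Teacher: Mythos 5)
Your proof is correct and follows the same overall plan as the paper's: define $\sY_i := p_i(U_i)$, show each $\sY_i$ is a global quotient stack, and show the $\sY_i$ cover $\sX$. For (a), the paper cites \cite[Lemma 2.12]{EHKV}; you unwind the argument behind it (a trivial stabilizer at one vector in a fiber forces faithfulness of the inertia action on the whole fiber, so the frame bundle has trivial inertia and is representable) --- same content, made explicit. You also verify (b) via the transition maps $f_{i,i+1}$, which the paper asserts without comment. Where your route genuinely diverges is the covering step (c). The paper picks $N$ with $\codim W_N > n$ and argues through the preimage $f_{i,N}^{-1}(W_N)$, invoking the inequality $\codim_{V_i} f_{i,N}^{-1}(W_N) > n$, which is not a formal consequence of $\codim_{V_N} W_N > n$ (for a nonempty intersection with a subbundle the codimension is generally \emph{at most} that of the original set, not at least). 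You avoid the transition maps in this step entirely: the fiber closure $\overline{p_i^{-1}(x)} \subseteq V_i$ has codimension $\dim\sO_{\sX,x}$, independent of $i$ once $x \in \sX_{i_0}$ because the $\sX_i$ are nested open substacks; when $x \notin \sY_i$ this fiber closure lies in $W_i$, giving $\codim W_i \leq \dim\sO_{\sX,x}$, which cannot persist as $\codim W_i \to \infty$. This makes the covering argument cleaner and more clearly correct than the paper's version.
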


\begin{proof}
	Let $\{(V_{i},W_i)\}_{i\geq 0}$ denote the exhaustive sequence of vector bundles corresponding to the filtration $\{\sX_i\}_{i\geq 0}$. Let $p_i:V_i\rightarrow \sX_i$ denote the structure map of the vector bundle $V_i$. Now by definition the complement $U_i= V_i\setminus W_i$ is a separated finite type $k$-scheme. Since $p_i$ is smooth, the image $p_i(U_i)\subset \sX_i$ is an open substack which is of finite type over $k$. Set $\sY_i:=p_i(U_i)$. Consider the restriction $V_i\times_{\sX_i}\sY_i\rightarrow \sY_i$ of $V_i$ to this substack. This is a vector bundle on $\sY_i$ which contains an open representable substack $U_i\subset V_i\times_{\sX_i}\sY_i$ that surjects onto $\sY_i$. Thus, $\sY_i$ is a global quotient stack, by \cite[Lemma 2.12]{EHKV}.
	
	Thus, we have an increasing filtration $\{\sY_i\}_{i\geq 0}$ such that $\sY_i\subseteq \sX_i$. The only thing left to check is that this filtration covers $\sX$. This follows from the following topological argument.
	
	Take a point $x\in \sX$. Then as the filtration $\{\sX_i\}_{i\geq 0}$ covers $\sX$, there exists an $i$ such that $x\in \sX_i$. We will show that there exists an $N\geq i$ such that $x\in \sY_N$. 
	
	If $x\in \sY_i$, there is nothing to prove. So assume that $x\notin \sY_i$. This means that $p_i^{-1}(x)\subset W_i$ in the vector bundle $V_i$. Let $Z:=\overline{p_i^{-1}\{x\}}$ be the closure of the fibre in $V_i$. Since $Z\subseteq W_i$ we see that 
	\[n:=\codim\, Z\geq \codim W_i.\]
	As $\{(V_i,W_i)\}_{i\geq 0}$ is an exhaustive sequence, there exists an $N$ such that $\codim\, W_N > n$. Further, we have a map $f_{i,N}:V_i\rightarrow V_N$ such that $f_{i,N}^{-1}(W_N)\subset W_i$. If $Z$ was contained in $f_{i,N}^{-1}(W_N)$, we would have
	\[n=\codim\, Z\geq \codim \, f_{i,N}^{-1}(W_N)> n,\]
	a contradiction. Thus, there exists $y\in p_i^{-1}(x)$ such that $f_{i,N}(y)\in U_N$ implying that $x\in \sY_N$.
\end{proof}

\begin{definition}[\!\!{\cite[Definition 2.17]{hoskins2019}}]
Let $\sX$ be an exhaustive stack with an exhaustive sequence of vector bundles $\{(V_i,W_i)\}_{i\geq 0}$. The motive $M_{exh}(\sX)$ is defined in as the colimit of the motives of the schemes $U_i$. That is,
\[M_{exh}(\sX)=\colim M(U_i).\]
\end{definition}

Since exhaustive stacks are cd-quotient stacks, we would like to compare the motive $M_{exh}(\sX)$ with the motive $M(\sX)$ in Definition \ref{definition-motive-stack}. The following proposition shows that they are isomorphic in $\DM{k,\Z}{}$.

\begin{proposition}
	\label{proposition-comparision-with-hoskins19}
	Let $\sX$ be a smooth exhaustive stack. Then
	\[M(\sX) \simeq M_{exh}(\sX) \;\; \text{in}\;\; \DM{k,\Z}{}\]
\end{proposition}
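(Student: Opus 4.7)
The plan is to exploit the auxiliary filtration $\sX = \bigcup_i \sY_i$ of Lemma \ref{proposition-filtration-by-quotients}, in which each $\sY_i = p_i(U_i) \subseteq \sX_i$ is a global quotient stack, and $U_i$ is realized as an open subscheme of the restricted vector bundle $V_i|_{\sY_i} \to \sY_i$ whose complement $Z_i := W_i \times_{\sX_i} \sY_i$ has codimension $c_i \to \infty$. I will establish the zigzag
\[M_{exh}(\sX) = \hocolim_i M(U_i) \xrightarrow{(a)} \hocolim_i M(\sY_i) \xleftarrow{(b)} M(\sX),\]
with both arrows equivalences in $\DM{k,\Z}{}$.

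For equivalence (b), the $\sY_i$ form an increasing chain of quasi-compact open substacks whose union is $\sX$, so the natural map $\colim_i \sY_i \to \sX$ is an isomorphism on every Nisnevich stalk. Since filtered colimits compute homotopy colimits of simplicial sheaves (\cite[Example 12.3.5]{BK}), this gives $\hocolim_i \sY_i \simeq \sX$ in $\mathcal{H}_{\bullet}(k)$. Applying $M$, which preserves homotopy colimits by Remark \ref{remark-motive-commutes-with-homotopy-colimits}, then yields (b).

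For equivalence (a), each map $U_i \to \sY_i$ factors as the open immersion $U_i \hookrightarrow V_i|_{\sY_i}$ followed by the vector bundle projection, and the projection is an $\A^1$-equivalence. It therefore suffices to show that $\hocolim_i M(U_i) \to \hocolim_i M(V_i|_{\sY_i})$ is an equivalence, or equivalently that the cofiber $C_i$ of $M(U_i) \to M(V_i|_{\sY_i})$ satisfies $\hocolim_i C_i \simeq 0$. For any compact generator $M(T)(j)[n]$ of $\DM{k,\Z}{}$,
\[\Hom(M(T)(j)[n],\, \hocolim_i C_i) \;\cong\; \colim_i \Hom(M(T)(j)[n],\, C_i),\]
and the localization sequence combined with the Gysin triangle (Theorem \ref{gysin-triangle}) applied to a smooth stratification of $Z_i$ forces the right-hand colimit to vanish once $c_i$ exceeds $j$, by the vanishing of motivic cohomology in negative Tate weight.

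The main obstacle is the singularity of $Z_i$ in step (a): the Gysin triangle applies directly only to smooth closed substacks, but the exhaustive sequence imposes no regularity on $W_i$. The remedy is to stratify $Z_i$ into smooth locally closed substacks of codimension at least $c_i$ and propagate the $\Hom$-vanishing through the resulting long exact sequences; ensuring that this bound is uniform in the transition maps $f_{i,i+1}$ of the admissible gadget is the technical heart of the argument.
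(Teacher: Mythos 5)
The paper's own proof is much shorter: it refines the $0$-skeleton of a $GL_n$-presentation to a \emph{scheme} atlas $Y\to \sX$ using Knutson's theorem, shows that $Y_\bullet\simeq\sX$ in $\sH_\bullet(k)$ by the same argument as Theorem \ref{motive-construction}, and then cites \cite[Proposition A.7]{hoskins2019} to identify $M(Y_\bullet)$ with $M_{exh}(\sX)$. Your zigzag through the filtration $\sX=\cup_i\sY_i$ of Lemma \ref{proposition-filtration-by-quotients} is a genuinely different route: instead of reducing to Hoskins--Lehalleur's comparison for a fixed simplicial atlas, you run a colimit comparison over the $\sY_i$ directly. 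Equivalence (b) in your sketch is correct and routine: any quasi-compact Nisnevich point factors through some $\sY_i$, so $\colim_i \sY_i\to\sX$ is a Nisnevich local weak equivalence, and $M$ preserves the filtered homotopy colimit by Remark \ref{remark-motive-commutes-with-homotopy-colimits}. The attraction of your route is that it isolates the codimension estimate cleanly and stays internal to the paper (no appeal to an \'etale result), at the cost of re-proving something the paper is content to cite.

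There is, however, a genuine error in your step (a). The stated vanishing ``$\Hom(M(T)(j)[n], C_i)=0$ once $c_i$ exceeds $j$, by the vanishing of motivic cohomology in negative Tate weight'' is not correct. When $c_i>j$, the Gysin/stratification analysis produces graded pieces of the form $M(Z_\alpha)(c_\alpha)[2c_\alpha]$ with $c_\alpha\geq c_i>j$, and after cancellation the relevant group is $\Hom(M(T)[n-2c_\alpha], M(Z_\alpha)(c_\alpha-j))$ with $c_\alpha - j>0$: this is motivic cohomology in \emph{positive} Tate weight, which does not vanish. A concrete counterexample: take $\sX=B\G_m$ with $V_i=[\A^i/\G_m]$, $W_i=B\G_m$ the zero section, $U_i=\P^{i-1}$; then $C_i\simeq\bigoplus_{l\geq i}\Z(l)[2l]$, and for $T$ a smooth curve with nontrivial $\Pic(T)$, $j=0$, $n=2-2i$, one has $\Hom(M(T)[n], C_i)\supseteq H^{2,1}(T)=\Pic(T)\neq 0$ even though $c_i=i>j=0$. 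What \emph{is} true is that for each fixed compact generator $M(T)(j)[n]$, the group $\Hom(M(T)(j)[n], C_i)$ vanishes for $i$ large enough, but the threshold depends on $n$ and $\dim T$ (via dimension-theoretic vanishing of motivic cohomology, i.e.\ $H^{p,q}(T)=0$ for $p>q+\dim T$), not merely on $j$. That is enough to conclude $\colim_i\Hom(M(T)(j)[n],C_i)=0$ and hence $\hocolim_i C_i\simeq 0$, but the justification you give would not survive scrutiny as written.

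The second issue you flag --- singularity of $W_i$ --- is real, and ``stratify and propagate'' is the right idea, but note that $Z_i$ is a closed substack of the stack $V_i|_{\sY_i}$, so you need the stratification and Gysin arguments in the stacky setting (which the paper does provide, Theorem \ref{gysin-triangle}), together with uniformity under the transition maps $f_{i,i+1}$. Also be aware that you implicitly invoke compact generation of $\DM{k,\Z}{}$, which holds for $k$ perfect; this hypothesis should be tracked. None of these points is fatal to the strategy, but as it stands the argument is not a proof: the effectivity bound is misstated and the technical heart of step (a) is deferred. The paper sidesteps all of this by invoking \cite[Proposition A.7]{hoskins2019}, where precisely these estimates are carried out.
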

\begin{proof}
	Let $X\rightarrow \sX$ be the $0$-skeleton of a $GL_n$-presentation. By \cite[Theorem II.6.4]{knutson}, there exists a Nisnevich covering $Y\rightarrow X$ with $Y$ a scheme. This gives us a presentation $Y\rightarrow \sX$. Let $Y_{\bullet}\rightarrow \sX$ be the associated \v{C}ech nerve. By similar argument as in Theorem \ref{motive-construction}, we get that $Y_{\bullet} \simeq \sX$ in $\mathcal{H}_{\bullet}(k)$, and so we have $M(Y_{\bullet}) \simeq M(\sX)$ in $\DM{k,\Z}{}$. Hence, it suffices to show that $M(Y_{\bullet}) \simeq M_{exh}(\sX)$ in $\DM{k,\Z}{}$.	The proof is exactly the same as \cite[Proposition A.7]{hoskins2019} using the atlas $Y_{\bullet}\rightarrow\sX$.
\end{proof}

\begin{example}
	Let $C$ be a smooth projective geometrically connected curve of genus $g$ over a field $k$.	In \cite[Section 3]{hoskins2019}, it is proved that the moduli stack $Bun_{n,d}$ of vector bundles on a $C$ of fixed rank $n$ and degree $d$ is exhaustive. To show this, they observe that it admits a filtration by the maximal slope of all vector bundles. Thus, $Bun_{n,d}$ is a filtered colimit of open substacks $Bun_{n,d}^{\geq \mu_l}$ where $\{\mu_l\}$ is an increasing sequence of rational number representing the maximal slope. Also, each of these open substacks is a global quotient stack and can be written as $Bun_{n,d}^{\geq \mu_l}:=[Q^{\geq \mu_l}/GL_N]$ where $Q^{\geq \mu_l}$ is an open subscheme of a Quot scheme (see \cite[Th\'{e}or\`{e}me 4.6.2.1]{LMB} for futher details). Then, by \cite[Lemma 2.26]{hoskins2019}, we have
	\[M_{exh}(Bun_{n,d})=\hocolim_l M_{exh}(Bun_{n,d}^{\geq \mu_l})\]
	and from Proposition \ref{proposition-comparision-with-hoskins19} we get that
	\[M(Bun_{n,d})=\hocolim_l M(Bun_{n,d}^{\geq \mu_l})=\hocolim_l M(Q^{\geq \mu_l}_{\bullet}).\]
	Thus, for $Bun_{n,d}$, the motive $M_{exh}$ of \cite{hoskins2019} can be computed as a homotopy colimit of the motive of Definition \ref{definition-motive-stack}.
	Since these homotopy colimits are being taken over filtered categories, they can actually be computed by their ordinary colimits (see \cite[Example 12.3.5]{BK}).
\end{example}

\begin{remark}
	Proposition \ref{proposition-comparision-with-hoskins19} shows that for exhaustive stacks the motive defined using the nerve construction in Definition \ref{definition-motive-stack} agrees with $M_{exh}$ defined in \cite{hoskins2019} in the Nisnevich topology. This potentially simplifies many of the functoriality arguments in \cite[\S 2]{hoskins2019}. For example, it is now immediate that $M_{exh}$ is independent of the choices involved in its construction (see also \cite[Lemma 2.20]{hoskins2019}). In \cite[Appendix A]{hoskins2019}, such a comparison is proved for \'{e}tale motives.
\end{remark}

\appendix
\section{Homotopy (co)limits and derived functors}

Let $I$ be an index category, and let $M^I$ denote the category of $I$-diagrams in $M$. $M^I$ is just the cateogry of functors $Fun(I,M)$. We have a constant functor $c:M\rightarrow M^I$, taking every object $A$ of $M$ to the constant $I$-diagram whose every object is $A$ and all maps are $id_A$. The left adjoint of $c$ (if it exists) is the colimit functor $\colim :M^I\rightarrow M$.\\
Let $F:M\rightleftarrows N: G$ be an adjunction. Then we also have an induced adjunction between the category of $I$-diagrams,
\[F^I:M^I\rightleftarrows N^I:G.\]

\begin{remark}\label{remark-reference-homotopy-colimits}
	While trying to prove the Projective Bundle formula (Theorem \ref{projective-bundle-formula}), we came across the following lemma. As pointed out to us by the referee this is \cite[Theorem 19.4.5(1)]{hirschhorn}. We include it here for the sake of completeness and thank the referee for pointing us to a reference.
\end{remark}

The following lemma shows that just as left adjoints commute with colimits, derived functors of left adjoints commute with \textit{homotopy colimits}.

\begin{lemma}\label{homotopy-colimits-left-adjoints}
	Let $F:M\rightleftarrows N: G$ be a Quillen adjunction of model categories. Let $I$ be an index category such that the projective model structure is defined on $M^I$ and $N^I$. Then, for an $I$-diagram $E$ in $M$, we have
	\[LF(\hocolim E)\simeq \hocolim LF^I(E),\]
	where $LF$ and $LF^I$ are the derived functors of $F$ and $F^I$, repectively.
\end{lemma}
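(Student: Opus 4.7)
The plan is to identify $\hocolim$ on both sides as the total left derived functor of the ordinary colimit, and then appeal to the standard fact that the total left derived functor of a composition of left Quillen functors agrees with the composition of their total left derived functors.

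First, I would recall that the colimit functor $\colim_M : M^I \to M$ is left adjoint to the constant diagram functor $c_M : M \to M^I$. With the projective model structure on $M^I$ (which is assumed to exist by hypothesis), this adjunction is a Quillen adjunction: fibrations and weak equivalences in the projective model structure are defined objectwise, so $c_M$ manifestly preserves them. The homotopy colimit is then, by definition, the total left derived functor $L\colim_M$, and similarly for $N$.

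Second, I would verify that the Quillen adjunction $(F,G)$ lifts to a Quillen adjunction
\[ F^I : M^I \rightleftarrows N^I : G^I \]
between the projective model structures. Since fibrations and weak equivalences in the projective model structure are computed objectwise, and $G$ preserves fibrations and acyclic fibrations, it follows immediately that $G^I$ also does. Thus $(F^I,G^I)$ is a Quillen adjunction. Next, because $F$ is a left adjoint, it preserves all colimits that exist in $M$; pointwise evaluation of diagrams yields a strict natural equality of functors
\[ F \circ \colim_M \;=\; \colim_N \circ F^I : M^I \to N. \]

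Third, both $F \circ \colim_M$ and $\colim_N \circ F^I$ are compositions of left Quillen functors. Invoking the standard result that the total left derived functor of a composition of left Quillen functors is naturally isomorphic to the composition of the total left derived functors (see, for instance, Hovey's book on model categories), the strict equality above descends to a natural isomorphism
\[ LF \circ L\colim_M \;\cong\; L\colim_N \circ LF^I \]
of functors between homotopy categories. Evaluating on $E \in M^I$ gives exactly $LF(\hocolim E) \simeq \hocolim(LF^I(E))$, as required.

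The only possible sticking point is the invocation of the composition result for total left derived functors, which relies on being able to choose a cofibrant replacement in $M^I$ whose image under $F^I$ remains cofibrant in $N^I$; this follows because $F^I$ is left Quillen and therefore sends cofibrant objects to cofibrant objects. Apart from this, the proof is purely formal.
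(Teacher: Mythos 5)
Your proof is correct and follows essentially the same route as the paper's: identify $\hocolim$ as $L\colim$ for the projective model structure, observe that $F^I$ is left Quillen because $G^I$ preserves objectwise fibrations and acyclic fibrations, note the strict equality $F\circ\colim_M=\colim_N\circ F^I$, and pass to derived functors. The only cosmetic difference is that you invoke the standard theorem that total left derived functors compose for left Quillen functors, whereas the paper unfolds that theorem by hand, explicitly tracking that $\colim QE$ and $F^I(QE)$ remain cofibrant.
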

\begin{proof}
	Note that $LF$ is defined as the composite
	\[Ho(M)\overset{Q}{\rightarrow} Ho(M_c)\overset{F}{\rightarrow} Ho(N)\]
	where $Q$ is the cofibrant replacement functor.
	
	Let $M^I$ denote the category of $I$-diagrams in $M$. Note that the fibrations and weak equivalences in the projective model structure on $M^I$ are defined object-wise. The homotopy colimit functor is the derived functor of the colimit functor $\colim : M^I\rightarrow M$ which is the left adjoint of the constant functor $M\rightarrow M^I$. More precisely,
	\[\hocolim: Ho(M^I)\overset{Q}{\rightarrow}Ho(M^I)\overset{\colim}{\rightarrow}Ho(M).\]
	Note that here $Q$ is the cofibrant replacement functor in the \textit{projective model structure} of $M^I$.
	This says that for any $I$-diagram $E$ the homotopy colimit can be computed by taking the ordinary colimit of its cofibrant replacement $QE$, i.e,
	\[\hocolim E\simeq \colim QE\]
	is a weak equivalence in the homotopy category.
	Since, $M^I$ has the projective model structure, {\rm colim} is left Quillen. Thus, $\colim QE$ is, in fact, a cofibrant object in $M$ and
	\begin{align*}
	LF(\hocolim E) &\simeq LF(\colim QE) \\
	&\simeq F(\colim QE).
	\end{align*}
	
	Now, observe that the adjoint pair $(F,G)$ induces an adjunction on the diagram categories associated to $I$,
	\[F^I:M^I\rightleftarrows N^I:G^I.\]
	As fibrations are defined object-wise and $G$ is right Quillen, $G^I$ preserves fibrations. Hence, $F^I$ is left Quillen, and preserves cofibrations. This means that the image $F^I(QE)$ is cofibrant in $N^I$.
	Then, we have
	\begin{align*}
	\hocolim LF^I(E) &\simeq \hocolim F^I(QE)\\
	&\simeq \colim F^I(QE).
	\end{align*}

As $F$ is a left adjoint, it commutes with ordinary colimits. That is, we have a commutative diagram,
\begin{center}
	\begin{tikzcd}
	M^I\arrow[r,"F^I"]\arrow[d] & N^I\arrow[d]\\
	M\arrow[r,"F"] &N
	\end{tikzcd}
\end{center}
where the vertical arrows are the colimit functor. Thus,
\[F(\colim QE)=\colim F^I(QE),\]
as required.
\end{proof}

\bibliography{mybib.bib}

\begin{thebibliography}{AHLHR}

\bibitem[AA]{aizenbud2019}
Avraham Aizenbud and Nir Avni.
\newblock Pointwise surjective presentations of stacks, 2019.

\bibitem[AHLHR]{ahhlr21}
Jarod Alper, Daniel Halpern-Leistner, Jack Hall, and David Rydh.
\newblock Artin algebraization for pairs and applications to the local
  structure of stacks and ferrand pushouts.
\newblock {\em in preparation}, 2021.

\bibitem[AHR]{AHR19}
Jarod Alper, Jack Hall, and David Rydh.
\newblock The étale local structure of algebraic stacks, 2019.

\bibitem[Alp]{goodmodulispaces}
Jarod Alper.
\newblock Good moduli spaces for {A}rtin stacks.
\newblock {\em Ann. Inst. Fourier (Grenoble)}, 63(6):2349--2402, 2013.

\bibitem[Ayo1]{ayoubthesis2}
Joseph Ayoub.
\newblock Les six op\'{e}rations de {G}rothendieck et le formalisme des cycles
  \'{e}vanescents dans le monde motivique. {II}.
\newblock {\em Ast\'{e}risque}, (315):vi+364 pp. (2008), 2007.

\bibitem[Ayo2]{ayoub-hopf}
Joseph Ayoub.
\newblock L'alg\`ebre de {H}opf et le groupe de {G}alois motiviques d'un corps
  de caract\'{e}ristique nulle, {II}.
\newblock {\em J. Reine Angew. Math.}, 693:151--226, 2014.

\bibitem[BK]{BK}
A.~K. Bousfield and D.~M. Kan.
\newblock {\em Homotopy limits, completions and localizations}.
\newblock Lecture Notes in Mathematics, Vol. 304. Springer-Verlag, Berlin-New
  York, 1972.

\bibitem[Blo]{Bloch86}
Spencer Bloch.
\newblock Algebraic cycles and higher {$K$}-theory.
\newblock {\em Adv. Math.}, 61(3):267--304, 1986.

\bibitem[CD]{cisinsi-deglise}
Denis-Charles Cisinski and Fr\'{e}d\'{e}ric D\'{e}glise.
\newblock Local and stable homological algebra in {G}rothendieck abelian
  categories.
\newblock {\em Homology Homotopy Appl.}, 11(1):219--260, 2009.

\bibitem[Cho]{MotivesDM}
Utsav Choudhury.
\newblock Motives of {D}eligne-{M}umford stacks.
\newblock {\em Adv. Math.}, 231(6):3094--3117, 2012.

\bibitem[Con]{conrad}
Brian Conrad.
\newblock Keel-mori theorem via stacks.
\newblock 2005.
\newblock \url{https://math.stanford.edu/\textasciitilde
  conrad/papers/coarsespace.pdf}.

\bibitem[Des]{deshmukh2021}
Neeraj Deshmukh.
\newblock Motives of algebraic stacks, 2021.
\newblock PhD thesis - IISER Pune.
  \url{http://dr.iiserpune.ac.in:8080/xmlui/handle/123456789/6300}.

\bibitem[DHI]{DHI}
Daniel Dugger, Sharon Hollander, and Daniel~C. Isaksen.
\newblock Hypercovers and simplicial presheaves.
\newblock {\em Math. Proc. Cambridge Philos. Soc.}, 136(1):9--51, 2004.

\bibitem[Dug]{dug}
Daniel Dugger.
\newblock Universal homotopy theories.
\newblock {\em Adv. Math.}, 164(1):144--176, 2001.

\bibitem[EG]{EG}
Dan Edidin and William Graham.
\newblock Equivariant intersection theory.
\newblock {\em Invent. Math.}, 131(3):595--634, 1998.

\bibitem[EHKV]{EHKV}
Dan Edidin, Brendan Hassett, Andrew Kresch, and Angelo Vistoli.
\newblock Brauer groups and quotient stacks.
\newblock {\em Amer. J. Math.}, 123(4):761--777, 2001.

\bibitem[Gro]{GrossRes}
Philipp Gross.
\newblock Tensor generators on schemes and stacks.
\newblock {\em Algebr. Geom.}, 4(4):501--522, 2017.

\bibitem[Hir]{hirschhorn}
Philip~S. Hirschhorn.
\newblock {\em Model categories and their localizations}, volume~99 of {\em
  Mathematical Surveys and Monographs}.
\newblock American Mathematical Society, Providence, RI, 2003.

\bibitem[HL]{hoskins2019}
Victoria Hoskins and Simon~Pepin Lehalleur.
\newblock On the voevodsky motive of the moduli stack of vector bundles on a
  curve, 2019.

\bibitem[HML]{heller13}
Jeremiah Heller and José Malagón-López.
\newblock Equivariant algebraic cobordism.
\newblock {\em Journal für die reine und angewandte Mathematik}, 2013(684):87
  -- 112, 01 Nov. 2013.

\bibitem[Hol1]{Holl}
Sharon Hollander.
\newblock Characterizing algebraic stacks.
\newblock {\em Proc. Amer. Math. Soc.}, 136(4):1465--1476, 2008.

\bibitem[Hol2]{HollPhD}
Sharon~Joy Hollander.
\newblock {\em A homotopy theory for stacks}.
\newblock ProQuest LLC, Ann Arbor, MI, 2001.
\newblock Thesis (Ph.D.)--Massachusetts Institute of Technology.

\bibitem[Hoy]{HoyoisSixOperations}
Marc Hoyois.
\newblock The six operations in equivariant motivic homotopy theory.
\newblock {\em Adv. Math.}, 305:197--279, 2017.

\bibitem[Jar]{Jar}
J.~F. Jardine.
\newblock Simplicial presheaves.
\newblock {\em J. Pure Appl. Algebra}, 47(1):35--87, 1987.

\bibitem[Jos1]{Joshua1}
Roy Joshua.
\newblock Higher intersection theory on algebraic stacks. {I}.
\newblock {\em $K$-Theory}, 27(2):133--195, 2002.

\bibitem[Jos2]{Joshua2}
Roy Joshua.
\newblock Higher intersection theory on algebraic stacks. {II}.
\newblock {\em $K$-Theory}, 27(3):197--244, 2002.

\bibitem[Knu]{knutson}
Donald Knutson.
\newblock {\em Algebraic spaces}.
\newblock Lecture Notes in Mathematics, Vol. 203. Springer-Verlag, Berlin-New
  York, 1971.

\bibitem[KR]{khan2021generalized}
Adeel~A. Khan and Charanya Ravi.
\newblock Generalized cohomology theories for algebraic stacks, 2021.

\bibitem[Kre1]{KreschChow}
Andrew Kresch.
\newblock Cycle groups for {A}rtin stacks.
\newblock {\em Invent. Math.}, 138(3):495--536, 1999.

\bibitem[Kre2]{KreschGeometry}
Andrew Kresch.
\newblock On the geometry of {D}eligne-{M}umford stacks.
\newblock In {\em Algebraic geometry---{S}eattle 2005. {P}art 1}, volume~80 of
  {\em Proc. Sympos. Pure Math.}, pages 259--271. Amer. Math. Soc., Providence,
  RI, 2009.

\bibitem[Kri]{krishna2012}
Amalendu Krishna.
\newblock The motivic cobordism for group actions, 2012.
\newblock \url{https://arxiv.org/abs/1206.5952}.

\bibitem[LMB]{LMB}
G\'{e}rard Laumon and Laurent Moret-Bailly.
\newblock {\em Champs alg\'{e}briques}, volume~39 of {\em Ergebnisse der
  Mathematik und ihrer Grenzgebiete. 3. Folge. A Series of Modern Surveys in
  Mathematics [Results in Mathematics and Related Areas. 3rd Series. A Series
  of Modern Surveys in Mathematics]}.
\newblock Springer-Verlag, Berlin, 2000.

\bibitem[MV]{MV}
Fabien Morel and Vladimir Voevodsky.
\newblock $\mathbb{A}^1$-homotopy theory of schemes.
\newblock {\em Publications Math{\'e}matiques de l'Institut des Hautes
  {\'E}tudes Scientifiques}, 90(1):45--143, 1999.

\bibitem[MVW]{MWV}
Carlo Mazza, Vladimir Voevodsky, and Charles Weibel.
\newblock {\em Lecture notes on motivic cohomology}, volume~2 of {\em Clay
  Mathematics Monographs}.
\newblock American Mathematical Society, Providence, RI; Clay Mathematics
  Institute, Cambridge, MA, 2006.

\bibitem[Pir]{pirisi2018}
Roberto Pirisi.
\newblock Cohomological invariants of algebraic stacks.
\newblock {\em Trans. Amer. Math. Soc.}, 370(3):1885--1906, 2018.

\bibitem[RS]{scholbach-shtukas}
Timo Richarz and Jakob Scholbach.
\newblock The intersection motive of the moduli stack of shtukas.
\newblock {\em Forum Math. Sigma}, 8:Paper No. e8, 99, 2020.

\bibitem[Ryd]{RydhApproximation}
David Rydh.
\newblock Noetherian approximation of algebraic spaces and stacks.
\newblock {\em J. Algebra}, 422:105--147, 2015.

\bibitem[To{\"e}]{Toen}
B.~To{\"e}n.
\newblock On motives for {D}eligne-{M}umford stacks.
\newblock {\em Internat. Math. Res. Notices}, (17):909--928, 2000.

\bibitem[Tot1]{TotaroChow}
Burt Totaro.
\newblock The {C}how ring of a classifying space.
\newblock In {\em Algebraic {$K$}-theory ({S}eattle, {WA}, 1997)}, volume~67 of
  {\em Proc. Sympos. Pure Math.}, pages 249--281. Amer. Math. Soc., Providence,
  RI, 1999.

\bibitem[Tot2]{Tot}
Burt Totaro.
\newblock The resolution property for schemes and stacks.
\newblock {\em J. Reine Angew. Math.}, 577:1--22, 2004.

\bibitem[Vis]{VistoliIntersection}
Angelo Vistoli.
\newblock Intersection theory on algebraic stacks and on their moduli spaces.
\newblock {\em Invent. Math.}, 97(3):613--670, 1989.

\bibitem[Voe]{voevodsky-finite-coefficients}
Vladimir Voevodsky.
\newblock On motivic cohomology with {$\bold Z/l$}-coefficients.
\newblock {\em Ann. of Math. (2)}, 174(1):401--438, 2011.

\bibitem[VSF]{FSV}
Vladimir Voevodsky, Andrei Suslin, and Eric~M. Friedlander.
\newblock {\em Cycles, transfers, and motivic homology theories}, volume 143 of
  {\em Annals of Mathematics Studies}.
\newblock Princeton University Press, Princeton, NJ, 2000.

\end{thebibliography}
\bibliographystyle{alphanum}

\end{document}